\newtheorem{theorem}{Theorem}
\newtheorem{proposition}{Proposition}
\newtheorem{lemma}{Lemma}
\newtheorem{corollary}{Corollary}
\theoremstyle{remark}
\newtheorem{remark}{Remark}
\newcommand{\C}{\mathbb{C}}
\newcommand{\e}{\mathrm{e}}
\newcommand{\D}{\Omega}
\newcommand{\ep}{\varepsilon}
\newcommand{\re}{\text{Re}}
\newcommand{\Dc}{\overline{\Omega}}
\newcommand{\dbar}{\overline{\partial}}
\newcommand{\zb}{\overline{z}}
\title[Essential norm estimates for the $\dbar$-Neumann operator]{Essential 
	norm estimates for the $\dbar$-Neumann operator on convex domains and 
	worm domains}
\author{\v{Z}eljko \v{C}u\v{c}kovi\'c}
\author{S\"{o}nmez \c{S}ahuto\u{g}lu}
\email{Zeljko.Cuckovic@utoledo.edu, Sonmez.Sahutoglu@utoledo.edu}
\address{University of Toledo, Department of Mathematics \& Statistics, 
Toledo, OH 43606, USA}
\subjclass[2010]{Primary  32W05; Secondary 47B35}
\keywords{Essential norm, $\dbar$-Neumann problem, Hankel operators, 
	convex domains}
\date{\today}
\begin{document}
	\begin{abstract}
		In the paper we give a lower estimate for the essential norm of the $\dbar$-Neumann 
		operator on convex domains and worm domains of Diederich and Forn\ae{}ss.  
	\end{abstract}
\maketitle

Let $\D$ be a bounded pseudoconvex domain in $\C^n$ and $b\D$ denote the
boundary of $\D$.  The space of square integrable $(0,q)$-forms on $\D$ is denoted  
by $L^2_{(0,q)}(\D)$ for $0\leq q\leq n$. In this paper we will only consider $(0,q)$-forms 
instead of $(p,q)$-forms because the theory is independent of $p$. The operator 
$\dbar:L^2_{(0,q)}(\D)\to L^2_{(0,q+1)}(\D)$ is a closed,  linear, and densely defined 
unbounded operator and it has a Hilbert space adjoint 
$\dbar^*:L^2_{(0,q+1)}(\D)\to L^2_{(0,q)}(\D)$.  This is an important operator in complex 
analysis. 
  
The $\dbar$-Neumann operator, denoted by $N_q$, is the solution 
operator for the complex Laplacian 
$\Box_q=\dbar\dbar^*+\dbar^*\dbar:L^2_{(0,q)}(\D)\to L^2_{(0,q)}(\D)$. 
The $\dbar$-Neumann operator is a self-adjoint bounded linear operator on 
$ L^2_{(0,q)}(\D)$ and $\dbar^*N_q$ gives the solution operator for $\dbar$ with 
minimal norm. Sobolev regularity properties of $N_q$ are important in several 
complex variables and have been widely studied. For a survey of such results we 
refer the reader to  \cite{BoasStraube99}. For more information about the 
$\dbar$-Neumann operator we refer the reader to two excellent books on the 
subject \cite{ChenShawBook,StraubeBook}. 

Compactness of the $\dbar$-Neumann operator is stronger than its global 
regularity \cite{KohnNirenberg65}. There are potential theoretic (Property $(P)$ 
of Catlin \cite{Catlin84} and Property $(\widetilde{P})$ of McNeal \cite{McNeal02}) 
as well as geometric (\cite{MunasingheStraube07,Straube08}) sufficient conditions 
for compactness. Yet, it is not clear if these conditions are also necessary in general. 
In case of convex domains compactness of $N_q$ is well understood. Fu and Straube in 
\cite{FuStraube98} showed that for $1\leq q\leq n$ the following conditions are 
equivalent: compactness of $N_q$, the domain satisfying Property $(P_q)$, 
absence of $q$-dimensional varieties in the boundary of the domain, and  
compactness of the commutators $[P_{q-1},\zb_j]$ for $1\leq j\leq n$ 
(here $P_{q-1}$ is the Bergman projection on $(0,q-1)$-forms). For more information 
about compactness of the $\dbar$-Neumann problem and related topics we refer 
the reader to the  survey \cite{FuStraube01} and the book \cite{StraubeBook}.

The aim of this paper is to quantify the failure of compactness of the $\dbar$-Neumann 
operator in terms of boundary geometry.  As far as we know this is the first attempt 
in that direction. 
   
Let $X$ and $Y$ be two normed linear spaces and $T:X\to Y$ be a  bounded linear operator. 
The \textit{essential norm} of $T$, denoted by $\|T\|_e$, is defined as  
\[\|T\|_e=\inf\{\|T-K\|:K:X\to Y \text{ is a compact operator}\}\]
where $\|.\|$ denotes the operator norm. 

The motivation for this paper came from a previous paper 
\cite{CuckovicSahutogluSubmitted} in which we studied the essential norm estimates 
for a Hankel operator $H_{\varphi}=[\varphi,P]$ in terms of the behavior of the symbol 
$\varphi$  on the discs in the boundary.  Compactness of 
the $\dbar$-Neumann operator is closely connected to compactness of Hankel operators 
(see \cite{CelikSahutoglu12,CelikSahutoglu14}). We note that, it is still unclear if 
compactness of $H_{\varphi}$ on $A^2(\D)$, the Bergman space on $\D$, for all 
$\varphi\in C(\Dc)$ is sufficient for compactness of $N$.  This is known as D'Angelo's 
question. 

The plan of the paper is as follows: In the next section we will state the main result,  
Theorem \ref{ThmCn},  establishing a lower bound for the essential norm of $N_q$ 
on convex domains in $\C^n$. Then we continue with a section devoted to 
Theorem \ref{ThmWorm}, an application of our techniques to get a lower bound for 
the essential norm of the $\dbar$-Neumann operator on the Diederich-Forn\ae{}ss 
type worm domains. Finally, in the last section we present some basic facts about 
the essential norms of operators, the Proposition \ref{PropCn}, and the proofs of 
Theorems \ref{ThmCn} and \ref{ThmWorm}.   

\section*{The Main Result}
Throughout this paper $\|f\|$  will denote the $L^2$ norm of the function $f$. 
When we want to emphasize the domain we will denote 
the $L^2$ norm on $\D$ by $\|.\|_{\D}$. Let us define 
$C^1_0(\Dc)$ to be the set of real-valued functions that  are 
$C^1$-smooth on $\Dc$ and vanish on $b\D$. Let us also define   
 \[\alpha_{\D}=\sup\left\{\frac{2\int_{\D}\chi(z)dV(z)}{\|\nabla \chi \|}: 
\chi\in C^1_0(\Dc) \text{ and }\chi\not\equiv 0 \right\}\] 
where $\nabla \chi$ denotes the (real) gradient of $\chi$.  Let $r=(r_1,\ldots,r_n)$. 
By $r>0$ (respectively $r\geq 0$) we mean $r_j>0$ (respectively $r_j\geq 0$)  
for $1\leq j\leq n$. We will denote the polydisc in $\C^n$ centered at $w$ 
with polyradius $r>0$ by $D(w,r)=\{z\in \C^n:|z_j-w_j|<r_j,1\leq j\leq n\}$.  
We use the convention  $D(w,0)=\{w\}$. We define $\beta_{D(w,0)}=0$ and 
	\[\beta_{D(w,r)}= \frac{\prod_{k=1}^n r_k}{\sqrt{\sum_{k=1}^n \frac{1}{r_k^2}}}\]  
if $r>0$.

We note that $\alpha_D$ is the square root of the \textit{torsional rigidity} of $D$ 
when $D$ is a simply connected domain in $\C$. Physically, torsional rigidity of 
$D\subset \C$ is proportional to the discharge of a viscous fluid flowing through 
a pipe with the cross section $D$ (see \cite[pg 103]{PolyaSzegoBook}).
 
\begin{theorem}\label{ThmCn}
	Let $\D$ be a bounded convex domain in $\C^n$ and $\tau_{\D}$  denote the diameter 
	of $\D$. Assume that $q_{\D}$ is the largest dimension of the (affine) analytic varieties  
	in $b\D$.  
		\begin{itemize}
			\item[i.] If $q\geq q_{\D}=0$ or $q>q_{\D}\geq 0$ then $\|N_q\|_e=0$.
			\item[ii.] If $1\leq  q\leq q_{\D}\leq n-1$ then 
			\[\|N_q\|_e\geq \frac{C(n,q_{\D})}{\tau_{\D}^{2q_{\D}}}
			\sup\Big\{\beta_{D(w,r)}^2:D(w,r)\text{ is }
			q_{\D}\text{-dimensional  polydisc in } b\D \text{ with } r\geq 0 \Big\}\]
			where 
			\[C(n,q_{\D})=\frac{(q_{\D}+1)^{2q_{\D}+2}(n-q_{\D})^{2n-2q_{\D}}}{(n+1)^{2n+2}}
			\frac{3^{q_{\D}-1}}{2^{2q_{\D}+1}}.\]
			\item[iii.] If $1\leq q\leq q_{\D}=n-1$ and $\D$ has $C^1$-smooth boundary then 
			\[\|N_q\|_e\geq 
				\frac{(n-1)!}{\pi^{n-1}\tau_{\D}^{2n-2}}\sup\left\{\alpha^2_M:M 
				\text{	is an affine } (n-1)\text{-dimensional variety in } b\D\right\}.\]	 
		\end{itemize}
\end{theorem}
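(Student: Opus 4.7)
The plan separates into Part (i) and Parts (ii)--(iii). Part (i) is immediate from the Fu--Straube characterization of compactness of $N_q$ on convex domains recalled in the introduction: the hypothesis that $q>q_\D$ (or $q\geq q_\D=0$) means $b\D$ contains no $q$-dimensional analytic variety, so $N_q$ is compact and $\|N_q\|_e=0$.

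For Parts (ii) and (iii), the strategy is the standard weakly-null-sequence argument, which I expect is packaged by the forthcoming Proposition \ref{PropCn}. The underlying inequality, valid for any weakly null sequence $\{f_k\}$ of $\dbar$-closed $(0,q)$-forms in $\mathrm{Dom}(\dbar^*)$, is
\[
\|N_q\|_e \;\geq\; \limsup_{k\to\infty}\frac{\|\dbar^* N_q f_k\|^2}{\|f_k\|^2},
\]
obtained from $\langle f_k, N_q f_k\rangle = \|\dbar N_q f_k\|^2 + \|\dbar^* N_q f_k\|^2$ together with $\langle Kf_k, f_k\rangle\to 0$ for every compact $K$. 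Since $u_k:=\dbar^* N_q f_k$ is the minimal-$L^2$ solution of $\dbar u_k=f_k$, the task reduces to producing weakly null $\dbar$-closed $f_k$ whose canonical solution $u_k$ has quantitatively large norm.

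For (ii), with $D(w,r)\subset b\D$ a $q_\D$-dimensional polydisc (nearly) realizing the supremum, translate $w$ to the origin and unitarily rotate so that $D(0,r)$ lies in $\{z_{q_\D+1}=\cdots=z_n=0\}$ and $\D$ is on one side of its real supporting hyperplane. Take $\phi$ to be a real $C^1$ cutoff vanishing on $b\D$, comparable to $\prod_{j=1}^{q_\D}(r_j^2-|z_j|^2)$ on the polydisc, and extended by a transverse cutoff at scale $\tau_\D$. Pair it with a weakly null $L^2$-normalized sequence $g_k$ of holomorphic functions concentrating at a transverse boundary point (translates of the Bergman kernel, or normalized transverse monomials) and set
\[
f_k = \dbar(\phi\, g_k\, d\zb_I)
\]
for a suitable $(q-1)$-multi-index $I$ drawn from the polydisc coordinates. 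Then $\dbar f_k=0$ and $f_k\in\mathrm{Dom}(\dbar^*)$; the canonical solution is $u_k=\phi\, g_k\, d\zb_I - P(\phi\, g_k\, d\zb_I)$, where $P$ projects onto $\ker\dbar$ on $(0,q-1)$-forms. Bounding $\|u_k\|$ below from $\|\phi g_k\|$ (whose size is governed by $\int \phi \sim \prod r_k$) and $\|f_k\|=\|\dbar(\phi g_k)\|$ above by $\|\nabla\phi\|\sim \sqrt{\sum 1/r_k^2}$, and accounting for transverse cutoff factors of $\tau_\D$, one obtains the ratio $\beta_{D(w,r)}^2/\tau_\D^{2q_\D}$ times the dimensional constant $C(n,q_\D)$. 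Part (iii) runs in parallel: $C^1$-smoothness and convexity place the maximal $(n-1)$-dimensional variety $M\subset b\D$ in a flat complex hyperplane, so the explicit product cutoff is replaced by an arbitrary $\chi\in C^1_0(\overline M)$ nearly realizing the supremum defining $\alpha_M$, lifted trivially in the transverse direction; the constant $\pi^{n-1}/(n-1)!$ then emerges from the ratio of the $L^2$-norms on $M$ and on a transverse disc of radius $\tau_\D$.

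The main obstacle will be the precise constant tracking. The factor $C(n,q_\D)$ encodes an optimization balancing the polydisc profile of $\phi$ against the transverse cutoff, with the polynomial weights in $(q_\D+1)$ and $(n-q_\D)$ pointing to a H\"older-type argument on tensored bump profiles where the exponents $2q_\D+2$ and $2n-2q_\D$ arise from squaring the $L^2$-ratios in the test inequality. A secondary subtlety, once the ansatz is fixed, is verifying that $P(\phi\, g_k\, d\zb_I)\to 0$ in norm fast enough that it does not erode the main contribution to $\|u_k\|$; this is exactly the role of the weakly null $g_k$, and establishing the required decay is where compactness of lower-order terms on appropriate subdomains will have to be used.
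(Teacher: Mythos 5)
Your reduction of the problem is fine as far as it goes: part (i) is indeed Fu--Straube (though for the case $q=q_\D=0$ you also need to pass from $N_1$ to $N_0$, e.g.\ via $N_0=\dbar^*N_1(\dbar^*N_1)^*$, since the Fu--Straube equivalences are stated for $q\geq 1$), and your inequality $\|N_q\|_e\geq\limsup_k\|\dbar^*N_qf_k\|^2/\|f_k\|^2$ for weakly null $f_k$ is a legitimate, even somewhat cleaner, substitute for the paper's route through the Calkin-algebra lemma and Range's formula. You are also missing a reduction the paper makes explicitly: the lower bound is proved only at the top level $q_\D$ and then transferred to all $1\leq q\leq q_\D$ by the monotonicity $\|N_{q+1}\|_e\leq\|N_q\|_e$ (Lemma \ref{LemPercolation}, Corollary \ref{CorPercolate}); your sketch instead gestures at a direct construction at level $q$ with a multi-index $I$ from the polydisc coordinates, and it is not explained why such a construction would produce a bound governed by $q_\D$-dimensional polydiscs.

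The genuine gap, however, is in the heart of the argument: the lower bound for the canonical solution. You set $f_k=\dbar(\phi g_kd\zb_I)$, write the canonical solution as $(I-P)(\phi g_kd\zb_I)$, and then assert that weak nullity of $g_k$ forces $\|P(\phi g_kd\zb_I)\|\to 0$. That claim is unjustified and in general false: $P$ is a projection, not a compact operator, so weak convergence of $g_k$ gives no norm decay of $P(\phi g_kd\zb_I)$. Indeed, whether the projection removes most of the mass of $\phi g_k$ is exactly what distinguishes compact from non-compact behavior (for $q=1$ your quantity is precisely $\|H_\phi g_k\|$, and for symbols with compact Hankel operator it tends to $0$ while $\|\phi g_k\|$ need not), so assuming it begs the question the theorem is answering. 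The paper avoids this entirely: it takes $F_j=f_jd\zb_1\wedge\cdots\wedge d\zb_{q_\D}$ with $f_j$ an Ohsawa--Takegoshi (B\l{}ocki) extension of normalized Bergman kernels of the transverse slice $\D_s$ (this is where the factor $\pi^{q_\D}\tau_\D^{2q_\D}$ enters), and then pairs $\dbar\dbar^*N_{q_\D}F_j=F_j$ against an explicit compactly supported bump form $\Phi$ on a shrunken polydisc $M_\lambda$; integration by parts plus the mean-value property of the holomorphic coefficient yields $|f_j(0,z'')|\leq\|\vartheta\Phi\|_{M_\lambda}\|\dbar^*N_{q_\D}F_j\|_{M_\lambda\times\{z''\}}$, which after integrating over $(1-\lambda)\D_s$ gives a lower bound on $\|\dbar^*N_{q_\D}F_j\|$ valid without any control of a Bergman projection term. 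The remaining quantitative inputs (Bergman kernel localization via the Fu--Straube peak-point argument and Nikolov's theorem, the optimization of $\lambda^{2q+2}(1-\lambda)^{2n-2q}$, and the factor from replacing $F_j$ by $F_j-F$ with $F$ the weak limit) are what produce $C(n,q_\D)$; your sketch has no mechanism that would generate these constants, and the ``H\"older-type argument on tensored bump profiles'' remains a hope rather than a proof. The same criticism applies to your part (iii), where the paper instead uses the sequence $f_j=2^{-j}z_n^{-\alpha_j}$ supported near a wedge guaranteed by $C^1$-smoothness, paired against an arbitrary $\chi\in C^1_0(\overline M)$ realizing $\alpha_M$.
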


Let $\mathbb{D}$ be the unit open disc in the complex plane and 
$\Delta(a,b)=\{a+b\xi:\xi\in\mathbb{D}\}$ where $a,b\in \C^n$. 
So $\Delta(a,b)$ is an (affine) disc in $\C^n$ centered at $a$ with radius $|b|$ and $\Delta(a, 
0)=\{a\}$. 
Then iii. in Theorem \ref{ThmCn} leads to the following corollary.

\begin{corollary}\label{CorC2}
 Let $\D$ be a bounded convex domain in $\C^2$ with $C^1$-smooth boundary. Then 
\[\|N_1\|_e \geq \frac{r_{b\D}^{4}}{2\tau_{\D}^2}\]
where  $r_{b\D}=\sup\{|b|:\Delta(a,b)\subset b\D\}$ 
and $\tau_{\D}$ denote the diameter of $\D$.
\end{corollary}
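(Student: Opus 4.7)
The plan is to specialize Theorem \ref{ThmCn} part iii to $n=2$ and $q=1$, and then estimate $\alpha_M^2$ from below for $M$ an affine disc $\Delta(a,b)\subset b\Omega$. The hypothesis $q_\Omega=n-1=1$ there becomes the assumption that $b\Omega$ contains some nontrivial disc; if not, then $r_{b\Omega}=0$ and the inequality is trivial, so I may assume $q_\Omega=1$. Under that assumption Theorem \ref{ThmCn} iii reads
\[
\|N_1\|_e \;\geq\; \frac{1}{\pi\tau_\Omega^2}\sup\left\{\alpha_M^2 : M\text{ an affine 1-dimensional variety in }b\Omega\right\},
\]
so it suffices to show $\alpha_{\Delta(a,b)}^2 \geq \pi|b|^4/2$ for every disc $\Delta(a,b)\subset b\Omega$, and then take the supremum over such discs.

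For the estimate on $\alpha_{\Delta(a,b)}$, I would use the test function $\chi(a+b\xi)=1-|\xi|^2$, obtained by pulling back the extremal for the torsional rigidity of $\mathbb{D}$ through the parameterization $\xi\mapsto a+b\xi$. This parameterization is a conformal rescaling by the factor $|b|$ from $\mathbb{D}\subset\mathbb{R}^2$ into $\mathbb{R}^4$, so the induced area element picks up a factor $|b|^2$ while the tangential gradient scales like $1/|b|$. Hence $\int_{\Delta(a,b)}\chi\,dV = |b|^2\int_\mathbb{D}(1-|\xi|^2)\,dV$ and $\|\nabla\chi\|_{\Delta(a,b)}=\|\nabla(1-|\xi|^2)\|_\mathbb{D}$; a short polar-coordinate computation evaluates these as $|b|^2\pi/2$ and $\sqrt{2\pi}$ respectively. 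Therefore $\alpha_{\Delta(a,b)}\geq |b|^2\sqrt{\pi/2}$, i.e.\ $\alpha_{\Delta(a,b)}^2 \geq \pi|b|^4/2$. Substituting this lower bound back into the display above and taking the sup over discs produces $\|N_1\|_e \geq r_{b\Omega}^4/(2\tau_\Omega^2)$.

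No step poses a genuine obstacle; the argument is essentially a specialization of Theorem \ref{ThmCn} together with a test-function evaluation. The one point that requires care is the scaling: since $\alpha_M$ is defined via the real gradient and the real $2$-dimensional volume on $M\subset\mathbb{C}^2$, one must verify that the Jacobian $|b|^2$ of the parameterization cancels precisely with the $1/|b|^2$ in $|\nabla\chi|^2$ coming from the chain rule, so that $\|\nabla\chi\|$ is invariant while $\int\chi\,dV$ scales by $|b|^2$. Everything else is mechanical.
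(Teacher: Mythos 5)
Your proposal is correct and follows essentially the same route as the paper: specialize Theorem \ref{ThmCn} iii to $n=2$ (handling the no-disc case trivially) and lower-bound $\alpha_M$ for a disc $\Delta(a,b)\subset b\D$ by $|b|^2\sqrt{\pi/2}$. The only difference is cosmetic: you verify this bound directly with the test function $1-|\xi|^2$ (using the conformal invariance of the Dirichlet integral in two real dimensions), whereas the paper cites the P\'olya--Szeg\H{o} inequality $\alpha_M\geq r_M\sqrt{V(M)/2}$ for convex planar $M$, which gives the same constant.
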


We note that the  inequality in the corollary above is due to the following 	
fact: for a convex domain $M\subset \C$ we have $\alpha_M\geq 
r_M\sqrt{\frac{V(M)}{2}}$ where $r_M$ denotes the radius of the largest circle  
contained in $M$ (see \cite[pg 99-100]{PolyaSzegoBook}). 
	
\begin{remark}
	The essential norm of a self-adjoint operator is related to the essential spectrum 
	(part of the spectrum that is the complement of the eigenvalues with finite 
	multiplicities) of the operator. More precisely, if $T$ is a self-adjoint operator and    
	$\sigma_e(T)$ denotes the essential spectrum $T$, then 
	$\|T\|_e=\sup\{|\lambda|: \lambda \in \sigma_e(T)\}$. Since $N_q$ is self-adjoint 
	our results give lower bound estimates for the radius of the essential spectrum 
	of $N_q$ in case the domain is bounded and convex.
\end{remark}

\section*{Application to Worm Domains}
Let us start by defining more general versions of  Diederich-Forn\ae{}ss worm domains. 
Let $r>1, \beta>0$, and 
\[\rho_{\beta,r}(z_1,z_2)=\left|z_1-\e^{i2\beta\log|z_2|}\right|-1
+\sigma(|z_2|^2-r^2)+\sigma(1-|z_2|^2)\]
where 
\[\sigma(t)=\begin{cases}M\e^{-1/t},&t>0\\0,&t\leq 0\end{cases}\] 
for $M>0$. 
Then for large enough $M$ the domains
\[\D_{\beta,r}=\left\{ (z_1,z_2)\in \C^2:\rho_{\beta,r}(z_1,z_2)<0\right\}\] 
are smooth bounded and pseudoconvex (see \cite[Proposition 1]{BarrettSahutoglu12}).  
These domains have a total winding of $2\beta\log r$ and contain the 
annulus $A_{r}=\{\xi\in \C:1<|\xi|<r\}$. 

Worm domains originally have been  constructed to show that some smooth bounded 
pseudoconvex domains do not have Stein neighborhood basis for their closures 
\cite{DiederichFornaess77}. However, they turned out to be a class of domains with 
irregular Bergman projections and $\dbar$-Neumann operators 
\cite{Barrett92,Christ96,KrantzPeloso08}. 
Now they are considered one of the important classes of domains in several complex 
variables. We choose to work on these domains rather than the original worm domains 
because we can decouple the winding numbers from the size of the annuli.   

In the next theorem we give a lower bound estimate for the essential norm of the 
$\dbar$-Neumann operator on worm domains defined above. 

\begin{theorem}\label{ThmWorm}
Let  $r>1$ and $\beta>0$. Then  the $\dbar$-Neumann operator on 
$\D_{\beta,r}$ has the following essential norm estimate 
\[\|N_1\|_e\geq \max\left\{\left(\frac{\eta^2+1}{2}-\frac{\eta^2-1}{2\log\eta}\right)
\frac{\pi-2\beta\log\eta}{\pi+2\beta\log\eta}:1<\eta <\min\{\e^{\pi/2\beta},r\}\right\}.\]
\end{theorem}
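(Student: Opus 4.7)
The plan is to apply Proposition~\ref{PropCn} to the worm $\D_{\beta,r}$, with test $(0,1)$-forms chosen to exploit the $S^1$-rotational invariance $(z_1, z_2)\mapsto(z_1, \e^{i\theta}z_2)$ and to concentrate near the boundary annulus $\{(0, z_2):1<|z_2|<\eta\}$.

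The starting identity is $\dbar^*N_1\dbar g = g - Pg$, with $P$ the Bergman projection on $\D_{\beta,r}$, which combined with $\langle N_1\dbar g, \dbar g\rangle = \|g-Pg\|^2$ for smooth $g$ (valid since the bounded pseudoconvex worm has trivial $L^2$-harmonic $(0,1)$-forms) gives
\[\|N_1\|_e \geq \limsup_{k\to\infty}\frac{\|g_k\|^2 - \|Pg_k\|^2}{\|\dbar g_k\|^2}\]
for any sequence with $\dbar g_k \rightharpoonup 0$. This is essentially Proposition~\ref{PropCn} specialized to the worm. Test functions in the $k$-th isotype of the rotation action are automatically weakly null as $k\to\infty$; however, for a nondegenerate lower bound the test functions must track the moving slice-center $c(|z_2|) = \e^{i2\beta\log|z_2|}$, whose $\dbar$ produces a term of order $\beta/\bar z_2$ carrying the winding information. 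A natural ansatz is $g_k(z_1, z_2) = \chi(|z_2|^2)\,z_2^k\,\phi(z_1 - c(|z_2|))$ with $\phi$ a suitable function on the unit disc and $\chi$ supported on $[1,\eta^2]$; the simpler choice $\phi\equiv 1$ gives a ratio that vanishes like $1/k^2$, which is why the extra factor tracking $c(|z_2|)$ is needed.

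The main difficulty is bounding $\|Pg_k\|^2$ from above. By rotational invariance $Pg_k = z_2^k F_k(z_1)$ with $F_k$ holomorphic on the $z_1$-projection of $\D_{\beta,r}$, reducing the projection to a slice-by-slice holomorphic approximation problem on the union of unit discs $\bigcup_{1<s<r} D(c(s), 1)$. The hypothesis $2\beta\log\eta < \pi$ forces the arc of slice-centers $\{c(s):1<s<\eta\}$ to lie in one half of the unit circle, and a conformal-map / harmonic-measure argument on the resulting half-disc model produces the approximation defect exactly as the winding factor $\frac{\pi - 2\beta\log\eta}{\pi + 2\beta\log\eta}$.

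After slicing (each slice for $1<|z_2|<r$ is a unit disc of area $\pi$, so Fubini reduces all norms to weighted one-variable integrals in $t=|z_2|^2$) and optimizing over $\chi$, the large-$k$ limit yields a one-dimensional Rayleigh quotient on the annulus $A_\eta=\{1<|z|<\eta\}$. Its maximizer is the solution $u$ of the Poisson equation $\Delta u=-2$ with zero boundary data; a direct computation gives $2\int_{A_\eta}u\,dA = \pi(\eta^2-1)\big[\tfrac{\eta^2+1}{2}-\tfrac{\eta^2-1}{2\log\eta}\big]$, so the first factor in the theorem equals $\alpha_{A_\eta}^2/\pi(\eta^2-1)$, i.e., the torsional rigidity of $A_\eta$ per unit area. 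Combining with the winding factor and taking the supremum over $\eta\in(1, \min\{\e^{\pi/2\beta}, r\})$ yields the claim; the two endpoints $\eta = r$ and $\eta = \e^{\pi/2\beta}$ correspond respectively to the geometric constraints that $A_\eta$ fits inside $A_r$ and that the total winding over $A_\eta$ is at most $\pi$. The main obstacle is the projection estimate: extracting the precise winding factor requires careful analysis of holomorphic approximation on the union of slice discs using the half-circle geometry of the slice-center arc.
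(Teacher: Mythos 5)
Your general framework is fine as far as it goes: the identity $\dbar^*N_1\dbar g=g-Pg$ and the Rayleigh-quotient bound along weakly null sequences are legitimate, the isotype trick for weak nullity works, and your identification of the first factor with $\alpha_{A_\eta}^2/\bigl(\pi(\eta^2-1)\bigr)$ agrees with Lemma \ref{LemTorsionalRigidity} and \eqref{EqnAnnulus}. But there is a genuine gap exactly where you locate "the main obstacle": the upper bound for $\|Pg_k\|$. The entire quantitative content of the theorem is the claim that the slice-by-slice holomorphic approximation defect on $\bigcup_{1<s<\eta}D(c(s),1)$ is bounded below by precisely the winding factor $\frac{\pi-2\beta\log\eta}{\pi+2\beta\log\eta}$, and for this you offer only the phrase "conformal-map / harmonic-measure argument on the half-disc model." Nothing in the sketch shows that such an argument yields this constant (harmonic-measure estimates of this type typically produce non-explicit or exponentially small bounds), nor that the optimization over $\phi$ and $\chi$ decouples so that the winding factor and the torsional-rigidity factor simply multiply in the large-$k$ limit. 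As written, the proposal is a plan whose decisive estimate is asserted rather than proved.

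For comparison, the paper's proof never estimates the Bergman projection at all, and the winding factor arises from elementary wedge geometry rather than from an approximation defect. One takes the $\dbar$-closed forms $F_j=\chi_\eta f_j\,d\zb_2$ with $f_j=2^{-j}z_1^{-\alpha_j}$, $\alpha_j=1-2^{-2j-1}$ (closedness is free because only the $\zb_1$-derivative can survive against $d\zb_2$, so the sharp cut-off $\chi_\eta$ in $|z_2|$ costs nothing), pairs $\dbar^*N_1F_j$ against $\vartheta\Phi$ with $\Phi=\chi\,d\zb_2$ on the annulus $A_{1+\ep}^{\eta-\ep}$ exactly as in \eqref{Eqn9}, and obtains $\|\dbar^*N_1F_j\|\geq 2\|f_j\|_{W^{\delta}_{\pi-2\beta\log\eta-\ep}}/\|\nabla\chi\|$. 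Since the slices over $1<|z_2|<\eta$ all contain a wedge of angle $\pi-2\beta\log\eta-\ep$ at $z_1=0$ and are contained in a wedge of angle $\pi+2\beta\log\eta$, the ratio of the $L^2$ norms of $|z_1|^{-\alpha_j}$ over the two wedges tends to the ratio of the angles as $\alpha_j\to 1$; taking the supremum over $\chi$ gives $\alpha_{A_\eta}$, and Corollary \ref{Cor1} converts $\|\dbar^*N_1\|_e$ into $\|N_1\|_e$. So the exact constant comes from the singular powers $z_1^{-\alpha_j}$ concentrating at the boundary annulus, not from any control of $P$; if you wish to salvage your route you must actually prove the projection bound with the stated constant, which is not easier than the paper's argument and is currently missing.
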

It is interesting that the estimate in Theorem \ref{ThmWorm} depends on the winding 
number as well as  the size of the annulus in the boundary. In contrast, the irregularity 
results of the $\dbar$-Neumann operator on the worm domains depend on the winding 
number only \cite{Barrett92,BarrettSahutoglu12}. 

\section*{Proofs}
We will need the following lemmas for the proof of the theorems. 

\begin{lemma}\label{LemAkaki}
Let $X$ and $Y$ be two Hilbert spaces and $T:X\to Y$ be a bounded linear 
operator. Then 
\[\|T\|_e^2=\|T^*\|^2_e=\|T^*T\|_e=\|TT^*\|_e.\]
\end{lemma}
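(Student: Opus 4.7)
The plan is to reduce all four equalities to the C*-identity in a Calkin algebra. The only wrinkle is that $T$ maps between two possibly different Hilbert spaces, while Calkin algebras naturally live on a single space; so I would first promote $T$ to an operator on $X \oplus Y$.

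Step one is the elementary identity $\|T\|_e = \|T^*\|_e$. Schauder's theorem gives a norm-preserving bijection $K \mapsto K^*$ between the compact operators from $X$ to $Y$ and those from $Y$ to $X$, and $\|S\| = \|S^*\|$ for every bounded $S$. Taking infima over $K$ therefore yields $\|T\|_e = \|T^*\|_e$.

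Step two embeds the action of $T$ inside a single Hilbert space. Let $H = X \oplus Y$ and define $\widetilde{T} \in \mathcal{B}(H)$ by $\widetilde{T}(x \oplus y) = 0 \oplus Tx$. Then $\widetilde{T}^*(x \oplus y) = T^* y \oplus 0$, $\widetilde{T}^* \widetilde{T} = T^*T \oplus 0$, and $\widetilde{T}\widetilde{T}^* = 0 \oplus TT^*$. A short $2 \times 2$ block-matrix computation shows that the $(Y,X)$-block of any compact perturbation of $\widetilde{T}$ is a compact perturbation of $T$ of no larger norm, while any compact perturbation of $T$ lifts by zero blocks to a compact perturbation of $\widetilde{T}$ with the same norm. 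This gives $\|\widetilde{T}\|_e = \|T\|_e$, and the same argument yields $\|\widetilde{T}^*\widetilde{T}\|_e = \|T^*T\|_e$ and $\|\widetilde{T}\widetilde{T}^*\|_e = \|TT^*\|_e$.

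Step three is the actual C*-identity. The Calkin algebra $\mathcal{B}(H)/\mathcal{K}(H)$ is a C*-algebra, so applying $\|a\|^2 = \|a^*a\| = \|aa^*\|$ to the class $[\widetilde{T}]$ gives $\|\widetilde{T}\|_e^2 = \|\widetilde{T}^*\widetilde{T}\|_e = \|\widetilde{T}\widetilde{T}^*\|_e$. Substituting the identifications from step two yields $\|T\|_e^2 = \|T^*T\|_e = \|TT^*\|_e$, and combining with step one produces the full chain of equalities claimed in the lemma. The main technical point is the block-matrix bookkeeping in step two; everything else is a direct invocation of standard Hilbert-space facts.
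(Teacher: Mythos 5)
Your proposal is correct and follows essentially the same route as the paper: promote $T$ to $\widetilde{T}(x,y)=(0,Tx)$ on $X\oplus Y$, match the essential norms of $T$, $T^*T$, $TT^*$ with those of $\widetilde{T}$, $\widetilde{T}^*\widetilde{T}$, $\widetilde{T}\widetilde{T}^*$ by the block/projection bookkeeping, and invoke the $C^*$-identity in the Calkin algebra of $X\oplus Y$. The only cosmetic difference is that you get $\|T\|_e=\|T^*\|_e$ directly from Schauder's theorem, whereas the paper routes this equality through the same dilation and the $C^*$-identity.
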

\begin{proof}
Let us define $\widetilde{T}:X\oplus Y\to X\oplus Y$ by 
$\widetilde{T}(x,y)=(0,Tx)$. Then $\|T\|=\|\widetilde{T}\|$. First we 
will show that $\|T\|_e=\|\widetilde{T}\|_e$. Let $K:X\to Y$ be a linear 
compact operator. Then $\|\widetilde{T}-\widetilde{K}\|=\|T-K\|$ where the 
linear compact operator $\widetilde{K}:X\oplus Y\to X\oplus Y$ is defined 
by $\widetilde{K}(x,y)=(0,Kx)$. Hence taking infimum over $K$ implies that  
$\|\widetilde{T}\|_e\leq \|T\|_e$. 

To show the reverse inequality, let  $\pi_X$ and $\pi_Y$ denote the projections 
from $X\oplus Y$ onto  $X$ and $Y$, respectively. Let 
$\widetilde{K}:X\oplus Y\to X\oplus Y$ be a compact linear operator. Then the 
component operators $\widetilde{K}_1=\pi_X \widetilde{K}$ and 
$\widetilde{K}_2=\pi_Y\widetilde{K}$ are compact. 
Let us define $\widetilde{T}_2=\pi_Y\widetilde{T}$. That is, $\widetilde{T}_2(x,y)=Tx$. 
Then 
\begin{align*}
\|\widetilde{T}-\widetilde{K}\|^2=&\|\widetilde{K}_1\|^2
+\|\widetilde{T}_2-\widetilde{K}_2\|^2\\
\geq &\sup\left\{\|Tx-\widetilde{K}_2(x,y)\|^2:\|x\|^2+\|y\|^2\leq 1\right\}\\
\geq &  \sup\left\{\|Tx-\widetilde{K}_2(x,0)\|^2:\|x\|^2\leq 1\right\}\\
=&\|T-K\|^2
\end{align*}
where $K:X\to Y$ is a compact operator defined by $Kx=\widetilde{K}_2(x,0)$.
Taking infimum over $\widetilde{K}$ we get $\|\widetilde{T}\|_e\geq \|T\|_e$. 
Therefore, we showed that  
\begin{align}\label{Eqn2}
 \|T\|_e=\|\widetilde{T}\|_e.
\end{align}

We will continue the proof with computing  $\widetilde{T}^*$. Let 
$x,u\in X$ and $y,v\in Y$. Then 
\begin{align*}
 \langle\widetilde{T}^*(x,y),(u,v)\rangle =&\langle 
(x,y),\widetilde{T}(u,v)\rangle \\
=& \langle(x, y),(0,Tu)\rangle\\
=&\langle (T^*y,0),(u,v)\rangle.
\end{align*}
Hence  $\widetilde{T}^*(x,y)=(T^*y,0), \|T^*\|=\|\widetilde{T}^*\|$ 
and, as was done earlier in the proof, one can show that 
$\|T^*\|_e=\|\widetilde{T}^*\|_e$. Furthermore  
\[\widetilde{T}^*\widetilde{T}(x,y)=\widetilde{T}^*(0,Tx)=(T^*Tx,0)\] 
and 
\[\widetilde{T}\widetilde{T}^*(x,y)=\widetilde{T}(T^*y,0)=(0,TT^*y).\]
Therefore, 
\begin{align}\label{Eqn3}
\|T^*T\|_e=\|\widetilde{T}^*\widetilde{T}\|_e \text{ and } 
\|TT^*\|_e=\|\widetilde{T}\widetilde{T}^*\|_e. 
\end{align}
Finally the fact that the Calkin algebra on a Hilbert space is a $C^*$-algebra 
(see, for example, \cite[5.6 Theorem]{ConwayBook}) implies that 
\[\|\widetilde{T}\|^2_e
=\|\widetilde{T}^*\|^2_e=\|\widetilde{T}^*\widetilde{T}\|_e
=\|\widetilde{T}\widetilde{T}^*\|_e.\]
Now combining the equality above with equalities \eqref{Eqn2} and 
\eqref{Eqn3} 
we get 
\[\|T\|_e^2=\|T^*\|^2_e=\|T^*T\|_e=\|TT^*\|_e.\]
Hence the proof of the lemma is complete.
\end{proof}

\begin{remark}
We use the Lemma above to show that the lower estimate in 
\cite[Theorem 2]{CuckovicSahutogluSubmitted} is sharp, in case 
$\varphi(z_1,z_2)=\zb_1$. First one can show that 
$H_{\zb_1}^{\mathbb{D}^2}(z_1^jz_2^k)=H_{\zb_1}^{\mathbb{D}}(z_1^j)z_2^k$ for 
all $j,k\geq 0$ and 
$\langle H^{\mathbb{D}}_{\zb_1}z_1^j,
H^{\mathbb{D}}_{\zb_1}z_1^k\rangle_{\mathbb{D}} =0$ 
unless $j=k$ (the inner product is on $\mathbb{D}$).  We use this fact in the 
equality below. In the computations below we denote the domain as a subscript 
unless  it is $\mathbb{D}^2$.
\begin{align*}
\left\|H_{\zb_1}^{\mathbb{D}^2}\sum_{j,k= 0}^{\infty}a_{jk}z_1^jz_2^k\right\|^2
&=\sum_{j,k=0}^{\infty}|a_{jk}|^2\left\|H_{\zb_1}^{\mathbb{D}}z_1^j\right\|^2_{\mathbb{D}} 
\|z_2^k\|^2_{\mathbb{D}}\\
&\leq \frac{1}{2}\sum_{j,k=0}^{\infty}|a_{jk}|^2\|z_1^j\|^2_{\mathbb{D}} 
\|z_2^k\|^2_{\mathbb{D}}\\
&= \frac{1}{2}\left\|\sum_{j,k= 
0}^{\infty}a_{jk}z_1^jz_2^k\right\|^2. 
\end{align*}
In the last inequality we used the fact that 
$\left\|H_{\zb_1}^{\mathbb{D}}\right\|=1/\sqrt{2}$ (see 
\cite[Theorem 1]{OlsenReguera16}). Therefore, 
$\left\|H_{\zb_1}^{\mathbb{D}^2}\right\|_{\mathbb{D}^2}\leq 1/\sqrt{2}$. 

Next we will show that $\left\|H_{\zb_1}^{\mathbb{D}^2}\right\|_e\geq1/\sqrt{2}$. 
Since $\left(H_{\zb_1}^{\mathbb{D}}\right)^*H_{\zb_1}^{\mathbb{D}}$ is 
a self-adjoint compact operator and its norm equals 
$1/2$, there exists an eigenfunction $f\in A^2(\mathbb{D})$ such that
\[\left(H_{\zb_1}^{\mathbb{D}}\right)^*H_{\zb_1}^{\mathbb{D}}f(z_1)
=\frac{f(z_1)}{2}.\] 
Then for $k\geq 0$ we have 
\[\left(H_{\zb_1}^{\mathbb{D}^2}\right)^*H_{\zb_1}^{\mathbb{D}^2}(f(z_1)z_2^k)
=\left(\left(H_{\zb_1}^{\mathbb{D}}\right)^*H_{\zb_1}^{\mathbb{D}}(f(z_1)) 
\right)z_2^k =\frac{f(z_1)z_2^k}{2}.\] 
That is, $1/2$ is an eigenvalue for 
$\left(H_{\zb_1}^{\mathbb{D}^2}\right)^*H_{\zb_1}^{\mathbb{D}^2}$ with infinite 
multiplicity. Then  $1/2$ is in the essential spectrum of
 $\left(H_{\zb_1}^{\mathbb{D}^2}\right)^*H_{\zb_1}^{\mathbb{D}^2}$
 (see \cite[Chapter XI, 4.6 Proposition]{ConwayBookFA}) and  
\[\left\|\left(H_{\zb_1}^{\mathbb{D}^2}\right)^*H_{\zb_1}^{\mathbb{D}^2}
\right\|_e\geq \frac{1}{2}.\] 
So
\[\frac{1}{\sqrt{2}}\leq 
\sqrt{\left\|\left(H_{\zb_1}^{\mathbb{D}^2}\right)^*H_{\zb_1}^{\mathbb{D}^2}
\right\|_e}=  \left\|H_{\zb_1}^{\mathbb{D}^2}\right\|_e
\leq \left\|H_{\zb_1}^{\mathbb{D}^2}\right\|\leq \frac{1}{\sqrt{2}}.\]
Therefore, 
$\left\|H_{\zb_1}^{\mathbb{D}^2}\right\|=\left\|H_{\zb_1}^{\mathbb{D}^2}\right\|_e
=1/\sqrt{2}$.
\end{remark}

\begin{remark}
Let $\D$ be a bounded pseudoconvex domain in $\C^n$. 
Let us define the operator $M_{\dbar\varphi}:A^2(\D)\to L^2_{(0,1)}(\D)$ as 
$M_{\dbar\varphi}f= f \dbar\varphi$. We note that $\|M_{\dbar \zb_k}\|_e=1$ 
for $1\leq k\leq n$. This can be seen as follows: Let $\{f_j\}$ be an orthonormal 
basis of $A^2(\D)$. Then using the fact that compact operators turn weakly convergent 
sequences into convergent sequence we conclude that 
\[ \lim_{j\to \infty}\|M_{\dbar \zb_k}f_j-Kf_j\|=  \lim_{j\to \infty}\|M_{\dbar 
\zb_k}f_j\|= \lim_{j\to \infty}\|f_j\|=1\] 
 for any compact operator $K:A^2(\D)\to L^2_{(0,1)}(\D)$. 
Hence, $1\leq \|M_{\dbar \zb_k}\|_e\leq \|M_{\dbar \zb_k}\| = 1$.

Let $\dbar^*N_{1,a}$ denote the 
restriction of $\dbar^*N_1$ onto $A^2_{(0,1)}(\D)$, the $(0,1)$-forms with 
square integrable holomorphic coefficients. Then one can show that   
\[\|H_{\zb_k}\|_e\leq\|\dbar^*N_{1,a}\|_e\|M_{\dbar\zb_k}\|_e\] 
for $k=1,2,\ldots,n$. The fact that  $\|M_{\dbar \zb_k}\|_e=1$ implies that 
\[\|\dbar^*N_1\|_e\geq \|\dbar^*N_{1,a}\|_e\geq 
\max\left\{\|H_{\zb_k}\|_e: k=1,2,\ldots,n\right\}.\]
Therefore, in case $\D=\mathbb{D}^2$ we get (see Corollary \ref{CorPercolate}) 	
\[\|N_1\|_e=\|\dbar^*N_1\|_e^2\geq 
\left\|\dbar^*N_{1,a}\right\|_e^2\geq 
\left\|H_{\zb_1}\right\|_e^2=\frac{1}{2}.\] 
 Comparing this estimate to Siqi Fu's result in \cite[pg 729]{Fu07}  about the 
bottom of the spectrum of $\Box_1$ shows that our estimate is not sharp on the 
bidisc. Indeed, the bottom of the spectrum of $\Box_1$ on the bidisc is 
$j_{0,1}^2/4\approx 1.44576576$ where $j_{0,1}\approx 2.4048$  is the first 
positive zero of the Bessel function of order zero. So 
$\|N_1\|_e=4/j_{0,1}^2\approx 0.69>1/2$. 
 \end{remark}

\begin{lemma}\label{LemDirectSum}
 Let $X_1,X_2,Y_1,Y_2$ be Hilbert spaces, and $T_1:X_1\to Y_1$ and 
 $T_2:X_2\to Y_2$ be bounded linear operators. Then 
 $T_1\oplus T_2:X_1\oplus X_2\to Y_1\oplus Y_2$ satisfies the 
 following equality
\[\|T_1\oplus T_2\|_e=\max \{\|T_1\|_e,\|T_2\|_e\}.\]
\end{lemma}

\begin{proof}
Let  $T=T_1\oplus T_2$, and $\pi_1$ and $\pi_2$ denote the projections from 
$Y_1\oplus Y_2$ onto $Y_1$ and $Y_2$, respectively.  Assume that 
$K:X_1\oplus X_2\to Y_1\oplus Y_2$ is a compact operator. 
Then $K_1=\pi_1K|_{X_1}$ and $K_2=\pi_2 K|_{X_2}$ are compact and 
\[\|(T-K)(x_1,0)\|\geq \|(T_1-K_1)x_1\| \text{ and } 
\|(T-K)(0,x_2)\|\geq \|(T_2-K_2)x_2\| \]
for all $x_1\in X_1$ and $x_2\in X_2$. Then  
\[\|T-K\|\geq \max\{\|T_1-K_1\|,\|T_2-K_2\|\} \geq 
\max\{\|T_1\|_e,\|T_2\|_e\}.\] 
Then taking infimum over $K$ we get 
\[\|T\|_e\geq \max\{ \|T_1\|_e,\|T_2\|_e\}.\]
Next we will prove the converse. Let $K_1:X_1\to Y_1$ and $K_2:X_2\to Y_2$ 
be compact operators. Then $K=K_1\oplus K_2:X_1\oplus X_2 \to Y_1\oplus Y_2$ 
is compact and 
\begin{align*}
\|T\|^2_e\leq \|T-K\|^2=&\sup\{ \|(T-K)(x_1,x_2)\|^2:\|x_1\|^2+\|x_2\|^2=1\}\\
=&\sup\{ \|(T_1-K_1)x_1\|^2+\|(T_2-K_2)x_2\|^2:\|x_1\|^2+\|x_2\|^2=1\} \\
\leq &\sup\{\|T_1-K_1\|^2\|x_1\|^2+\|T_2-K_2\|^2(1-\|x_1\|^2):\|x_1\|\leq 1\}\\
 =&\max\{\|T_1-K_1\|^2,\|T_2-K_2\|^2\}.
 \end{align*} 
Taking infimum over $K_1$ and $K_2$ we get
\[\|T\|_e\leq \max\{\|T_1\|_e,\|T_2\|_e\}.\]
Therefore, $\|T\|_e= \max\{\|T_1\|_e,\|T_2\|_e\}$
and the proof of the lemma is complete.
\end{proof}

Now we will prove a more precise version of \cite[Lemma 1]{CelikSahutoglu14}. 
We note that $ K^2_{(0,q)}(\D)$ denotes the $\dbar$-closed $(0,q)$-forms on $\D$.
 
\begin{lemma}\label{LemEstimate}
Let $\D$ be a bounded pseudoconvex domain in $\C^n$ for $n\geq 2$ and 
$g \in K^2_{(0,q+1)}(\D)$ where $1\leq q\leq n-1$.  Then 
there exist $g_j\in K^2_{(0,q)}(\D)$ for $1\leq j\leq n $ such that 
\[g=\sum_{j=1}^{n} g_j\wedge d\zb_j \text{ and } 
\sum_{j=1}^{n}\|g_j\|^2\leq \|g\|^2.\]
\end{lemma}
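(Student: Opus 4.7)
The plan is to exploit the Koszul structure of the wedge-sum map in two stages: first produce an initial algebraic decomposition $g=\sum_j \tilde g_j\wedge d\zb_j$ with the sharp norm identity $\sum_j \|\tilde g_j\|^2=\|g\|^2$ but without $\dbar$-closedness, then correct each $\tilde g_j$ by a Koszul-exact term so that it lies in $K^2_{(0,q)}(\D)$ while both the wedge-sum identity and the $L^2$ bound survive.

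For the initial step, expand $g = \sum_{J} g_J\, d\zb_J$ over strictly increasing $(q+1)$-tuples $J$, and for each $j\in\{1,\ldots,n\}$ define
\[
\tilde g_j \;=\; (-1)^q \sum_{\substack{J\text{ increasing}\\ \min J = j}} g_J\, d\zb_{J\setminus\{j\}}.
\]
Since $d\zb_{J\setminus\{j\}}\wedge d\zb_j = (-1)^q d\zb_J$ whenever $j=\min J$, a term-by-term computation gives $\sum_j \tilde g_j\wedge d\zb_j = g$, and because every increasing $J$ has a unique minimum the basis expansions of the $\tilde g_j$'s are supported on disjoint index sets, yielding $\sum_j \|\tilde g_j\|^2=\|g\|^2$ by Pythagoras. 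The form $\tilde g_j$ is typically not $\dbar$-closed, but $\dbar g=0$ forces $\sum_j (\dbar \tilde g_j)\wedge d\zb_j = 0$, so by pointwise exactness of the Koszul complex there exist antisymmetric $(0,q)$-forms $h_{jk}=-h_{kj}$ with $\dbar \tilde g_j=\sum_k h_{jk}\wedge d\zb_k$. A further application of the Koszul argument (using $\dbar^2=0$) allows one to arrange $\dbar h_{jk}=0$. Since $\D$ is bounded pseudoconvex and $q\ge 1$, Kohn's canonical solution $\phi_{jk}=\dbar^* N_q h_{jk}$ gives antisymmetric $(0,q-1)$-forms with $\dbar \phi_{jk}=h_{jk}$, and setting
\[
g_j \;:=\; \tilde g_j-\sum_{k=1}^n \phi_{jk}\wedge d\zb_k
\]
makes each $g_j$ $\dbar$-closed, while the antisymmetry of $\phi_{jk}$ kills the cross terms in $\sum_j g_j\wedge d\zb_j$, preserving $\sum_j g_j\wedge d\zb_j=g$.

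The main obstacle is the norm inequality $\sum_j\|g_j\|^2 \le \|g\|^2$, because the Pythagorean equality for the $\tilde g_j$'s is tight and any careless correction would spoil it. I would ensure this by choosing minimal-norm primitives at every stage: pointwise the minimum-norm $h_{jk}$ solving the Koszul equation, and the canonical Kohn solution $\phi_{jk}=\dbar^*N_q h_{jk}$, which automatically lies in the range of $\dbar^*$ and therefore in $K^2_{(0,q-1)}(\D)^\perp$. The target estimate should then follow by splitting $\tilde g_j = P\tilde g_j + (I-P)\tilde g_j$ with respect to the orthogonal projection $P$ onto $K^2_{(0,q)}(\D)$ and identifying the Koszul correction with the component $(I-P)\tilde g_j$ (up to pieces that cancel across the sum), after which $\|g_j\|^2\le \|\tilde g_j\|^2$ yields the bound. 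Verifying this identification rigorously — and in particular checking that the iterated Koszul choice of $\dbar$-closed $h_{jk}$ is compatible with it — is the technical heart of the proof.
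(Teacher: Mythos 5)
There is a genuine gap, and in fact two distinct problems. First, the algebra of your correction step is backwards. If $\phi_{jk}=-\phi_{kj}$, the cross terms do \emph{not} cancel: pairing $(j,k)$ with $(k,j)$ gives $\phi_{jk}\wedge d\zb_k\wedge d\zb_j+\phi_{kj}\wedge d\zb_j\wedge d\zb_k=(\phi_{jk}-\phi_{kj})\wedge d\zb_k\wedge d\zb_j=2\phi_{jk}\wedge d\zb_k\wedge d\zb_j$, so an antisymmetric correction changes $\sum_j g_j\wedge d\zb_j$; it is the \emph{symmetric} choice that preserves it. Correspondingly, the ``pointwise exactness'' you invoke is false in the antisymmetric form: for $n=2$ the element $(d\zb_1,0)$ satisfies $\sum_j\alpha_j\wedge d\zb_j=0$ but cannot be written as $\alpha_j=\sum_k h_{jk}\wedge d\zb_k$ with $h_{jk}=-h_{kj}$ (antisymmetry forces $h_{11}=0$). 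These two defects happen to be repairable simultaneously by switching to symmetric $h_{jk}$ (with unconstrained diagonal), but as written the scheme is inconsistent. The further assertion that one can ``arrange $\dbar h_{jk}=0$'' is also needed for $\phi_{jk}=\dbar^*N_qh_{jk}$ to satisfy $\dbar\phi_{jk}=h_{jk}$, and is not proved.

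The more serious gap is that the norm inequality --- the entire point of the lemma --- is never established. Nothing in the construction shows that the canonical corrections $\sum_k\phi_{jk}\wedge d\zb_k$ coincide with $(I-P)\tilde{g}_j$, or are even orthogonal to the resulting $g_j$; they need not lie in the orthogonal complement of $K^2_{(0,q)}(\D)$, so the desired bound $\|g_j\|\leq\|\tilde{g}_j\|$ does not follow from anything you wrote. You acknowledge this yourself by deferring ``the technical heart'' to an unverified identification, but that identification is exactly where the difficulty of the lemma lives: the tension between making the $g_j$ $\dbar$-closed, keeping $\sum_j g_j\wedge d\zb_j=g$, and controlling the norms. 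The paper avoids this tension entirely by working with the canonical solution rather than correcting an algebraic splitting: set $f=\dbar^*N_{q+1}g$, let $f_j=d\zb_j\vee f$, so that $f=\frac{1}{q}\sum_j d\zb_j\wedge f_j$, and define $g_j=\frac{(-1)^{q-1}}{q}\dbar f_j$. These are automatically $\dbar$-closed (being $\dbar$-exact), satisfy $\sum_j g_j\wedge d\zb_j=\dbar f=g$, and the estimate $\sum_j\|g_j\|^2\leq\|g\|^2$ follows from the basic $L^2$ inequality $\sideset{}{'}\sum_{|J|=q}\sum_k\|\partial f_J/\partial\zb_k\|^2\leq\|\dbar f\|^2+\|\dbar^*f\|^2=\|g\|^2$ on bounded pseudoconvex domains, together with a counting of how often each coefficient appears. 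If you want to salvage your two-stage approach, you would need a genuinely new argument for the norm control of the corrected forms; as it stands the proposal proves the decomposition but not the estimate.
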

\begin{proof} 
Let $1\leq q\leq n-1$ and 
\[f=\sideset{}{'}\sum_{|J|=q}f_{J} d\zb_J=\dbar^*N_{q+1}g\in L^2_{(0,q)}(\D).\]
The symbol $\sideset{}{'}\sum_{|J|=q}$ above denotes the summation over 
strictly increasing index $J$. That is, $J=j_1j_2\cdots j_q$ with 
$j_1<j_2<\cdots j_q$.   Let $\vee$ denote the adjoint of the exterior 
multiplication. That is, if $f$ is a $(0,q)$-form $d\zb_j \vee f$ is a 
$(0,q-1)$-form such that 
$\langle h\wedge d\zb_j,f \rangle =\langle h,d\zb_j\vee f \rangle$ for all 
$h\in L^2_{(0,q-1)}(\D)$. We define $f_j=d\zb_j\vee f$ for $1\leq j\leq n$. 
Then one can show that 
\[f_j=\sideset{}{'}\sum_{|I|=q-1}f_{jI}d\zb^I 
\text{ for } 1\leq j\leq n \text{ and } f=\frac{1}{q}\sum_{j=1}^nd\zb_j\wedge 
f_j.\]
Every $f_J$  appears in $q$ different $f_j$'s for $J=jI$. The decomposition 
above was observed by Jeffery McNeal and it has appeared in 
\cite[pg. 75]{StraubeBook}. Then 
 \[\dbar f_j=\sideset{}{'}\sum_{|I|=q-1}\dbar f_{jI}\wedge 
 d\zb^I=\sideset{}{'}\sum_{|J|=q}F_J^j d\zb^J\]
 where each $F_J^j$ is a sum of at most $q$ terms of the form 
 $\frac{\partial f_{jI}}{\partial \zb_k}$ because each term appears at most 
once. Now we use the fact that 
$(x_1+\cdots +x_q)^2 \leq q(x_1^2+\cdots +x_q^2)$ for 
real numbers $x_1,\ldots, x_q$ to conclude that  
\[\|\dbar f_j\|^2\leq q\sideset{}{'}\sum_{|I|=q-1}\sum_{k=1}^n
 \left\|\frac{\partial f_{jI}}{\partial \zb_k}\right\|^2\]
for all $k$'s.  We note that $q^2$ appears on the second equality below because 
each  $\frac{\partial f_{jI}}{\partial \zb_k}$  appears  $q$ many times as 
$\frac{\partial f_{J}}{\partial \zb_k}$. Then we use  
\cite[Corollary 2.13]{StraubeBook}  to get 
\begin{align*}
\sum_{j=1}^n\|\dbar f_j\|^2 
&\leq q\sum_{j=1}^n\sideset{}{'}\sum_{|I|=q-1} \sum_{k=1}^n
 \left\|\frac{\partial f_{jI}}{\partial \zb_k}\right\|^2\\
&=q^2\sideset{}{'}\sum_{|J|=q} \sum_{k=1}^n
 \left\|\frac{\partial f_J}{\partial \zb_k}\right\|^2\\
& \leq q^2(\|\dbar f\|^2+\|\dbar^*f\|^2)\\
&=q^2\|g\|^2.
 \end{align*}
Let us define $g_j=\frac{(-1)^{q-1}}{q}\dbar f_j$. 
Then $\sum_{j=1}^n \|g_j\|^2\leq \|g\|^2$ and 
\[g=\dbar \dbar^*N_{q+1}g
=\dbar f=\frac{(-1)^{q-1}}{q}\sum_{j=1}^n\dbar f_j\wedge d\zb_j
=\sum_{j=1}^n g_j\wedge d\zb_j.\]
Hence the proof of the lemma is complete. 
\end{proof}

\begin{lemma}\label{LemPercolation}
 Let $\D$ be a bounded pseudoconvex domain in $\C^n$ and 
 $1\leq q\leq n-1$. Then 
 \[\|\dbar^*N_{q+1}\|_e\leq \|\dbar^*N_{q}\|_e.\] 
\end{lemma}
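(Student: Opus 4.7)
The plan is to combine the decomposition Lemma~\ref{LemEstimate} with the weakly null sequence characterization of essential norm, reducing the estimate for $\dbar^{*}N_{q+1}$ to one for $\dbar^{*}N_{q}$. First I would reduce to $\dbar$-closed inputs: using the commutation $\dbar^{*}N_{q+1}=N_{q}\dbar^{*}$ and the inclusion $(\ker\dbar)^{\perp}\subset \ker\dbar^{*}$ (from $\mathrm{Range}(\dbar)\subset \ker\dbar$ in the pseudoconvex setting), the operator $\dbar^{*}N_{q+1}$ annihilates $(\ker\dbar)^{\perp}$, so it suffices to estimate it on $K^{2}_{(0,q+1)}(\D)$. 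For $g\in K^{2}_{(0,q+1)}(\D)$, Lemma~\ref{LemEstimate} furnishes $\dbar$-closed $(0,q)$-forms $g_{1},\dots,g_{n}$ with $g=\sum_{j}g_{j}\wedge d\zb_{j}$ and $\sum_{j}\|g_{j}\|^{2}\leq\|g\|^{2}$. Setting $u_{j}:=\dbar^{*}N_{q}g_{j}$ (the canonical solution to $\dbar u_{j}=g_{j}$), the form $u:=(-1)^{q-1}\sum_{j}u_{j}\wedge d\zb_{j}$ satisfies $\dbar u=g$, so $\|\dbar^{*}N_{q+1}g\|\leq \|u\|$ by minimality of the canonical solution.

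To pass from the operator norm bound to the essential norm estimate I would invoke the characterization $\|T\|_{e}=\sup\{\limsup_{n}\|Tx_{n}\|:x_{n}\to 0\text{ weakly in the unit ball}\}$. Given a weakly null sequence $(g_{n})$ in the unit ball of $K^{2}_{(0,q+1)}(\D)$, boundedness of the adjoint $(\dbar^{*}N_{q+1})^{*}=\dbar N_{q}$ forces $f_{n}:=\dbar^{*}N_{q+1}g_{n}\to 0$ weakly in $L^{2}_{(0,q)}(\D)$; consequently $f_{n,j}:=d\zb_{j}\vee f_{n}\to 0$ weakly in $L^{2}_{(0,q-1)}(\D)$. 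The decomposition forms $g_{n,j}=\tfrac{(-1)^{q-1}}{q}\dbar f_{n,j}$ are uniformly $L^{2}$-bounded (by the lemma) and pair to zero against smooth compactly supported test forms via integration by parts, so $g_{n,j}\to 0$ weakly in $L^{2}_{(0,q)}(\D)$. Extracting a subsequence so that $\|g_{n,j}\|^{2}\to c_{j}^{2}$ with $\sum_{j}c_{j}^{2}\leq 1$, the weakly null characterization applied in each fixed index gives $\limsup_{n}\|\dbar^{*}N_{q}g_{n,j}\|\leq \|\dbar^{*}N_{q}\|_{e}\,c_{j}$, and summing in the finite index $j$ together with the $u_{n}$-construction delivers the essential norm inequality.

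The main obstacle is closing the loop on constants. The wedge-sum construction obeys at best $\|u\|^{2}\leq q\sum_{j}\|u_{j}\|^{2}$ by pointwise Cauchy--Schwarz, and one can in fact verify that equality is forced in our situation: writing the coefficient contraction $f_{j}=d\zb_{j}\vee f$ of $f=\dbar^{*}N_{q+1}g$ in its orthogonal decomposition $f_{j}=(-1)^{q-1}q\,u_{j}+h_{j}$ with $h_{j}\in \ker\dbar\cap L^{2}_{(0,q-1)}(\D)$, the minimality $\|f\|\leq\|u\|$ together with the identity $\|f\|^{2}=q\sum\|u_{j}\|^{2}+\tfrac{1}{q}\sum\|h_{j}\|^{2}$ forces $h_{j}=0$ and $\|u\|=\|f\|$. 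Chaining with $\sum\|u_{j}\|^{2}\leq \|\dbar^{*}N_{q}\|_{e}^{2}\sum\|g_{j}\|^{2}\leq \|\dbar^{*}N_{q}\|_{e}^{2}\|g\|^{2}$ yields $\|\dbar^{*}N_{q+1}\|_{e}\leq\sqrt{q}\,\|\dbar^{*}N_{q}\|_{e}$, so to reach the stated sharp bound one needs an additional structural refinement, either a sharpened version of Lemma~\ref{LemEstimate} valid on the image of the decomposition map (effectively $\sum\|g_{j}\|^{2}\leq \|g\|^{2}/q$ there) or a finer identification of $f$ with the canonical $(0,q-1)$-solutions supplied by $\dbar^{*}N_{q}$; this is the technical heart of the proof.
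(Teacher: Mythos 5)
Your overall route is the one the paper takes: reduce to $\dbar$-closed forms, apply Lemma~\ref{LemEstimate} to write $g=\sum_j g_j\wedge d\zb_j$ with $\sum_j\|g_j\|^2\le\|g\|^2$, wedge the level-$q$ canonical solutions into a competitor solution at level $q+1$, and invoke minimality of the canonical solution $\dbar^*N_{q+1}g$. The only genuine difference is the essential-norm bookkeeping: the paper perturbs explicitly by compacts, setting $S_{q+1}g=\sum_k\dbar^*N_q(g_k)\wedge d\zb_k$ and $K^{\ep}_{q+1}g=\sum_k K^{\ep}_q(g_k)\wedge d\zb_k$ and using $\dbar^*N_{q+1}-(I-P_{q+1})K^{\ep}_{q+1}=(I-P_{q+1})(S_{q+1}-K^{\ep}_{q+1})$ on $K^2_{(0,q+1)}(\D)$, whereas you run a weakly-null-sequence characterization of the essential norm; that part of your argument (including the weak nullity of the $g_{n,j}$ and the reduction to $\dbar$-closed inputs) is fine in principle.

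The problem is that, by your own admission, the argument does not prove the stated inequality: it delivers only $\|\dbar^*N_{q+1}\|_e\le\sqrt{q}\,\|\dbar^*N_q\|_e$, and the ``additional structural refinement'' needed to remove the factor $\sqrt{q}$ is exactly what is left unproved. Your attempted repair (``equality is forced'') does not work as written: it conflates $u_j=\dbar^*N_qg_j$, built from the decomposition of $g$, with the contractions $f_j=d\zb_j\vee f$ of the canonical solution $f=\dbar^*N_{q+1}g$; these need not coincide up to the factor $(-1)^{q-1}/q$ (the $f_j$ are not orthogonal to $\ker\dbar$ in general), and the asserted identity $\|f\|^2=q\sum_j\|u_j\|^2+\tfrac1q\sum_j\|h_j\|^2$ is unjustified because the cross terms $\langle u_j\wedge d\zb_j,\,u_k\wedge d\zb_k\rangle$ with $j\ne k$ need not vanish when $q\ge2$. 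For comparison, the paper settles this point in one line, estimating $\|S_{q+1}g-K^{\ep}_{q+1}g\|^2\le\sum_k\|(\dbar^*N_q-K^{\ep}_q)g_k\|^2$, i.e.\ the wedge-sum bound with constant $1$ rather than $q$ (immediate when $q=1$, where no cross terms occur; for $q\ge2$ it is precisely the cross-term issue you flagged). So you have correctly located the crux of the proof, but your write-up neither proves that estimate nor supplies a substitute, and therefore it establishes only a weaker inequality than the lemma claims.
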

\begin{proof}
 Let $K^2_{(0,q)}(\D)$ denote the $\dbar$-closed square integrable 
$(0,q)$-forms. Assume that $f\in K^2_{(0,q+1)}(\D)$. Then by Lemma  
\ref{LemEstimate} for $1\leq k\leq n$ there exists  $f_k\in 
K^2_{(0,q)}(\D)$ such that 
\[f=\sum_{k=1}^nf_k\wedge d\zb_k \text{ and } 
\sum_{k=1}^n\|f_k\|^2\leq \|f\|^2.\]  
Assume that $\alpha_q=\|\dbar^*N_q\|_e$. Then for $\ep>0$ there exists a 
compact operator $K_q^{\ep}$ on $L^2_{(0,q)}(\D)$ such that 
\[\|\dbar^*N_q-K_q^{\ep}\|<\alpha_q+\ep.\] 
Let us define 
\[S_{q+1}f=\sum_{k=1}^n\dbar^*N_q(f_k)\wedge d\zb_k \text{ and }
K_{q+1}^{\ep}f=\sum_{k=1}^nK_q^{\ep}(f_k)\wedge d\zb_k.\]
Then 
\[\|S_{q+1}f-K_{q+1}^{\ep}f\|^2
\leq\sum_{k=1}^n\|\dbar^*N_qf_k-K_{q}^{\ep} f_k\|^2
\leq\|\dbar^*N_q-K_{q}^{\ep}\|^2\sum_{k=1}^n\|f_k\|^2. \]
That is, $\|S_{q+1}-K_{q+1}^{\ep}\|\leq \|\dbar^*N_q-K_{q}^{\ep}\|$ as 
$\sum_{k=1}^n\|f_k\|^2\leq \|f\|^2$. We note that on $K^2_{(0,q+1)}(\D)$ 
we have 
\[\dbar^*N_{q+1}=(I-P_{q+1})S_{q+1}\] 
because $\dbar^*N_{q+1}$ is the canonical 
solution operator for $\dbar$ and $S_{q+1}$ is a solution operator. Then  
\[\dbar^*N_{q+1}-\widetilde{K}_{q+1}^{\ep}=(I-P_{q+1})(S_{q+1}-K_{q+1}^{\ep})\] 
where $\widetilde{K}_{q+1}^{\ep}=(I-P_{q+1})K_{q+1}^{\ep}$ is a 
compact operator. Hence  
\[ \|\dbar^*N_{q+1}-\widetilde{K}_{q+1}^{\ep}\|
\leq \|S_{q+1}-K_{q+1}^{\ep}\|\leq  \|\dbar^*N_q-K_{q}^{\ep}\| 
\leq \alpha_q+\ep.\]
We complete the proof of the lemma by letting $\ep\to 0$.
\end{proof}

\begin{corollary}\label{CorPercolate}
 Let $\D$ be a bounded pseudoconvex domain in $\C^n$ and 
 $1\leq q\leq n-1$. Then 
 \[\|N_{q+1}\|_e\leq \|N_{q}\|_e.\] 
 Furthermore, $ \|N_0\|_e= \|N_1\|_e=\|\dbar^*N_1\|_e^2.$
\end{corollary}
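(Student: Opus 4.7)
The plan is to reduce both inequalities to the monotonicity $\|\dbar^*N_{q+1}\|_e\leq\|\dbar^*N_q\|_e$ already furnished by Lemma \ref{LemPercolation}, via the bounded-operator identity
\[
N_q=A_q+B_q,\qquad A_q:=(\dbar^*N_q)^*(\dbar^*N_q),\qquad B_q:=(\dbar^*N_{q+1})(\dbar^*N_{q+1})^*,
\]
valid on $L^2_{(0,q)}$ for $1\leq q\leq n-1$, together with the boundary case $N_0=B_0$. These identities are obtained by writing $N_q=N_q\Box_q N_q$, splitting $\Box_q=\dbar\dbar^*+\dbar^*\dbar$, and invoking the standard commutations $N_q\dbar=\dbar N_{q-1}$, $N_q\dbar^*=\dbar^*N_{q+1}$ together with the adjoint identifications $(\dbar^*N_q)^*=\dbar N_{q-1}$ and $(\dbar^*N_{q+1})^*=\dbar N_q$. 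The decisive algebraic observation is that the cross products vanish: by the commutation, the interior factor of $A_qB_q$ satisfies $\dbar^*N_q\dbar^*=\dbar^*\dbar^*N_{q+1}=0$, so $A_qB_q=B_qA_q=0$.

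For the first inequality, the vanishing of $A_qB_q$ together with the self-adjointness and positivity of both summands forces $\mathrm{range}(A_q)\perp\mathrm{range}(B_q)$ and each summand to kill the other's range. Hence $N_q$ acts as the orthogonal direct sum of $A_q$ on $\overline{\mathrm{range}(A_q)}$ and $B_q$ on $\overline{\mathrm{range}(B_q)}$, extended by zero on their common orthogonal complement; for such a direct sum the essential norm equals the maximum of the summands' essential norms. Using $\|A_q\|_e=\|\dbar^*N_q\|_e^2$ and $\|B_q\|_e=\|\dbar^*N_{q+1}\|_e^2$ (Lemma \ref{LemAkaki}) together with $\|\dbar^*N_{q+1}\|_e\leq\|\dbar^*N_q\|_e$ (Lemma \ref{LemPercolation}) gives $\|N_q\|_e=\|\dbar^*N_q\|_e^2$ for $1\leq q\leq n-1$. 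Applying this at two consecutive levels and Percolation once more,
\[
\|N_{q+1}\|_e=\|\dbar^*N_{q+1}\|_e^2\leq\|\dbar^*N_q\|_e^2=\|N_q\|_e.
\]

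For the second inequality, Lemma \ref{LemAkaki} applied to $N_0=B_0$ yields $\|N_0\|_e=\|\dbar^*N_1\|_e^2$. The bound $\|N_0\|_e\leq\|N_1\|_e$ follows from $N_1=A_1+B_1\geq B_1\geq 0$ (positivity gives $\|N_1\|_e\geq\|B_1\|_e$). For the reverse $\|N_1\|_e\leq\sqrt 2\,\|N_0\|_e$, the cleanest route is to square using self-adjointness: Lemma \ref{LemAkaki} gives $\|N_1\|_e^2=\|N_1^2\|_e$, and the vanishing of cross terms gives $N_1^2=A_1^2+B_1^2$, whence
\[
\|N_1\|_e^2\leq\|A_1\|_e^2+\|B_1\|_e^2=\|\dbar^*N_1\|_e^4+\|\dbar^*N_2\|_e^4\leq 2\|\dbar^*N_1\|_e^4=2\|N_0\|_e^2
\]
by Lemmas \ref{LemAkaki} and \ref{LemPercolation}. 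The main obstacle I anticipate is the standard, but not explicitly stated, orthogonal-direct-sum fact underlying the first inequality, namely that $\|A\oplus B\|_e=\max\{\|A\|_e,\|B\|_e\}$ when $A,B$ are positive self-adjoint operators on invariant orthogonal summands; this has to be extracted directly from the definition of the essential norm rather than being imported from the quoted lemmas.
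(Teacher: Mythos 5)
Your proposal is correct in substance, but the decisive functional–analytic step is genuinely different from the paper's. The paper also starts from Range's formula $N_q=(\dbar^*N_q)^*(\dbar^*N_q)+(\dbar^*N_{q+1})(\dbar^*N_{q+1})^*$, but it only records that the two summands have ranges in $\mathrm{Ker}(\dbar)$ and $\mathrm{Im}(\dbar^*)$ and then applies Lemma \ref{LemDirectSum} together with Lemma \ref{LemAkaki} to obtain the sum-of-squares identity \eqref{EqnRange}, $\|N_q\|_e^2=\|\dbar^*N_q\|_e^4+\|\dbar^*N_{q+1}\|_e^4$; both assertions of the corollary, including the constant $1/\sqrt2$, are then read off from \eqref{EqnRange} and Lemma \ref{LemPercolation}. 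You instead exploit $A_qB_q=B_qA_q=0$ to see that $N_q$ is honestly block diagonal (each summand annihilates the closure of the other's range, and $N_q$ vanishes on the joint complement), and for a block-diagonal operator the essential norm is the \emph{maximum} of the blocks' essential norms. This yields the stronger conclusions $\|N_q\|_e=\|\dbar^*N_q\|_e^2$ for $1\le q\le n-1$ and $\|N_0\|_e=\|N_1\|_e$, from which the corollary is immediate (with the $1/\sqrt2$ bound holding with room to spare). The difference is not cosmetic: in the block-diagonal situation, where the domain splits along with the range, the correct value is the maximum rather than the Pythagorean sum, so your route does not reproduce \eqref{EqnRange} but rather replaces it; what it buys is a sharper identity and independence from Lemma \ref{LemDirectSum}, at the cost of the auxiliary fact you flag, $\|A\oplus B\|_e=\max\{\|A\|_e,\|B\|_e\}$ for an operator respecting an orthogonal decomposition. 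That fact is not among the quoted lemmas, but the short argument you anticipate does work: compressing an arbitrary compact perturbation to either block gives the lower bound, and block-diagonal compact perturbations give the upper bound. Your conclusion is consistent with Corollary \ref{Cor1}, which records the equality $\|N_0\|_e=\|N_1\|_e=\|\dbar^*N_1\|_e^2$ in $\C^2$.

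One slip to repair: for $\|N_0\|_e\le\|N_1\|_e$ you invoke $N_1\ge B_1$ and $\|N_1\|_e\ge\|B_1\|_e$, but $\|B_1\|_e=\|\dbar^*N_2\|_e^2$, which is not $\|N_0\|_e$. You need the other summand: $\|N_0\|_e=\|\dbar^*N_1\|_e^2=\|A_1\|_e$ by Lemma \ref{LemAkaki}, and then either $N_1\ge A_1\ge0$ (noting that monotonicity of the essential norm under the operator order requires passing to the Calkin algebra, again a fact outside the quoted lemmas) or, more simply, your own identity $\|N_1\|_e=\max\{\|A_1\|_e,\|B_1\|_e\}\ge\|A_1\|_e=\|N_0\|_e$. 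With that correction the argument is complete.
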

\begin{proof} 
Let $1\leq q\leq n-1$. 
We will use Range's formula \cite{Range84,FuStraube01}, 
	\[N_q=(\dbar^*N_q)^*(\dbar^*N_q)+(\dbar^*N_{q+1})(\dbar^*N_{q+1})^*.\]
We note that  $(\dbar^*N_q)^*(\dbar^*N_q)$ is a mapping on $Ker(\dbar)$ 
(as it maps  $Im(\dbar^*)$ to zero) and similarly $(\dbar^*N_{q+1})(\dbar^*N_{q+1})^*$ 
is  a mapping on $Im(\dbar^*)$ (because it maps  $Ker(\dbar)$ to zero). 
Then we use the fact that $L^2_{(0,q)}(\D)=Ker(\dbar)\oplus Im(\dbar^*)$  
together with  Lemmas \ref{LemAkaki}, \ref{LemDirectSum}, and \ref{LemPercolation} 
to conclude that  
\begin{align}\label{EqnRange}
\|N_q\|_e=&\max\{\|\dbar^*N_q\|_e^2,\|\dbar^*N_{q+1}\|_e^2\}
= \|\dbar^*N_q\|_e^2 \\ 
\nonumber \|N_{q+1}\|_e=&\max\{\|\dbar^*N_{q+1}\|_e^2,\|\dbar^*N_{q+2}\|_e^2\}
=\|\dbar^*N_{q+1}\|_e^2.
\end{align}
Then Lemma \ref{LemPercolation} again implies that 
 $\|N_{q+1}\|_e\leq  \|N_{q}\|_e.$
 
In case $q=0$, Range's formula is $N_0=\dbar^*N_1(\dbar^*N_1)^*$. 
Then using Lemma \ref{LemAkaki} again we get  
$\|N_0\|_e=\|\dbar^*N_1\|_e^2.$
Furthermore, 
\[\|N_1\|_e=\max\{\|\dbar^*N_1\|_e^2,\|\dbar^*N_2\|_e^2\}=\|\dbar^*N_1\|_e^2.\] 
Therefore, $\|N_0\|_e= \|N_1\|_e=\|\dbar^*N_1\|_e^2.$
\end{proof}

\begin{lemma}\label{LemDiscs}
 Let $\D$ be a convex domain in $\C^n$ for $n\geq 2, p\in b\D$. Assume 
 that $M_1$ and $M_2$ are two analytic varieties in the $b\D$ and 
 $p\in M_1\cap M_2$. Then the convex hull of $M_1\cup M_2$ is an affine 
 analytic variety in $b\D$.
\end{lemma}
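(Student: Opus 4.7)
The plan is to exhibit a complex affine subspace $L$ containing $M_1\cup M_2$ such that $L\cap\Dc\subseteq b\D$; the conclusion then follows because the convex hull of $M_1\cup M_2$ lies in the convex set $L\cap\Dc$.

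After translating so that $p=0$, let $L_i$ denote the smallest complex affine subspace containing $M_i$; since $0\in M_i$, $L_i$ is a complex linear subspace of $\C^n$. Set $L=L_1+L_2$. The main step is to show that $L$ lies in every real supporting hyperplane of $\Dc$ at $0$. Given such a hyperplane $H=\{z:\re\ell(z)=0\}$, chosen with $\ell\in(\C^n)^*$ normalized so that $\re\ell\leq 0$ on $\Dc$, the pluriharmonic function $\re\ell$ attains its maximum $0$ at $0\in M_i$. The maximum principle for pluriharmonic functions on a connected analytic variety gives $\re\ell\equiv 0$ on $M_i$. Then $\ell|_{M_i}$ is holomorphic with image in $i\mathbb{R}\subset\C$, a set with empty interior; the open mapping theorem on the regular part of $M_i$ forces $\ell|_{M_i}$ to be locally constant, and connectedness of $M_i$ together with $\ell(0)=0$ yields $\ell\equiv 0$ on $M_i$. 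Since $\ell$ is $\C$-linear, vanishing on $M_i$ forces vanishing on the complex affine span $L_i$, hence on $L=L_1+L_2$, so $L\subseteq H$.

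Once this is established, a point of $L\cap\D$ would be interior to $\Dc$ while lying in every supporting hyperplane at $0$, which is impossible; thus $L\cap\D=\emptyset$ and $L\cap\Dc\subseteq b\D$. The set $L\cap\Dc$ is convex, contains $M_1\cup M_2$, and sits inside the complex affine subspace $L$, so it is an affine analytic variety in $b\D$ containing the convex hull of $M_1\cup M_2$.

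The main obstacle is the maximum principle step: $0$ may be a singular point of $M_i$, so I rely on the standard fact that a plurisubharmonic function attaining its supremum on a connected analytic variety is constant, applied to $\re\ell$. A secondary point is that the argument uses only connectedness of $M_1$ and $M_2$ through $p$---their affine structure is deduced from the convexity of $\Dc$ rather than assumed at the outset.
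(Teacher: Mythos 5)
Your argument is sound in substance and takes a genuinely different route from the paper's. The paper first replaces $M_1$ and $M_2$ by affine varieties through $p$ in $b\D$, quoting \cite[Lemma 2]{CuckovicSahutoglu09}, puts everything inside the single supporting complex hyperplane $\{z_n=0\}$, and then iterates a dimension reduction: whenever the convex hull lies in a real hyperplane of the current complex subspace, holomorphicity of the corresponding coordinate on the (now affine, hence smooth) $M_i$ pushes the hull into a smaller complex subspace, and the process stops when the hull is full-dimensional in some complex affine subspace. You instead work directly with the possibly singular varieties and with the supporting functionals at $p$: the maximum principle for plurisubharmonic functions on a connected variety gives $\re\,\ell\equiv0$ on $M_i$, the open mapping theorem on the regular part (with the usual chaining through irreducible components; as in the paper, one tacitly takes the $M_i$ connected, or works with their components through $p$) upgrades this to $\ell\equiv0$, and $\C$-linearity transfers the vanishing to $L=L_1+L_2$, whence $L\cap\D=\emptyset$ and $L\cap\Dc\subset b\D$. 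Your route avoids both the cited lemma and the induction, and yields slightly more (the whole slice $L\cap\Dc$ lies in the boundary); the paper's route stays with holomorphic functions on complex manifolds and elementary convexity, never needing plurisubharmonicity at singular points.

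One loose end: the lemma asserts that the convex hull itself is an affine analytic variety, while your last sentence only places it inside $L\cap\Dc$, which, being a closed convex set, is not literally a variety. The gap closes with your own tools. First, every real-linear functional vanishing on $M_i$ is $\re\,\ell$ for some $\C$-linear $\ell$, and your argument then gives $\ell\equiv0$ on $L_i$; hence the real affine hull of $M_1\cup M_2$ is all of $L$. Second, the hull is relatively open in $L$: if $\phi$ were a functional, nonconstant on $L$, supporting the hull at one of its points, then writing that point as a convex combination of points of $M_1\cup M_2$ forces $\phi$ to attain its maximum over the hull at a point of some $M_j$, so $\phi$ is constant on $M_j$ by the maximum principle; since $p\in M_1\cap M_2$ realizes that maximal value, $\phi$ is constant on $M_1\cup M_2$, hence on its affine hull $L$, a contradiction. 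Thus the hull is an open connected subset of the complex affine subspace $L$ contained in $b\D$, which is the literal statement. To be fair, the paper's own proof is equally brief on this point (its ``without loss of generality'' enlarges $M_1,M_2$ and hence the hull), and the containment you establish is all that the proof of Proposition \ref{PropCn} actually uses.
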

\begin{proof}
We will use the following fact: Any analytic variety in the boundary of a 
convex  domain in $\C^n$ is contained in affine analytic variety in the 
boundary of the domain. This fact was proven for analytic discs in 
\cite[Lemma 2]{CuckovicSahutoglu09} (see also \cite{FuStraube98} 
as well as \cite{BoasStraube92,McNeal92}). The same 
proof works for higher dimensional varieties in the boundary of a convex domain 
in $\C^n$ as well. Without loss of generality assume that 
\begin{itemize}
 \item[i.] $\D\subset \{(z_1,\ldots,z_n)\in \C^n:\re(z_n)< 0\}$ and $0\in b\D$,
 \item[ii.] $M_1$ and $M_2$ are affine analytic varieties in $b\D$ 
such that $0\in M_1\cap M_2$.
\end{itemize} 
Since $M_1$ and $M_2$ are  affine analytic varieties (using the fact that
tangent space of a complex manifold in $b\D$ is in the complex tangent space of 
$b\D$) we conclude that 
\[M_1\cup M_2\subset \{(z_1,\ldots,z_n)\in \C^n:z_n=0\}.\] 
Let $M$ denote the convex hull of $M_1\cup M_2$. Then $M$ is contained 
in the complex hyperplane $\{(z_1,\ldots,z_n)\in \C^n:z_n=0\}$. If $M$ is not 
an $(n-1)$-dimensional complex manifold, by applying rotation in the first 
$(n-1)$-variables if necessary, we may assume that $M\subset 
\{(z_1,\ldots,z_n)\in 
\C^n:\re(z_{n-1})=z_n=0\}$.  Invoking the fact that $M_1$ and $M_2$ are complex 
manifolds again we conclude that 
$M \subset \{(z_1,\ldots,z_n)\in \C^n:z_{n-1}=z_n=0\}$.  Using this 
argument, finitely many times, we reach the conclusion that $M$ is an 
affine analytic variety in $b\D$. 
\end{proof}

\begin{lemma}\label{LemWeakConv}
	Let $\D$ be a smooth bounded pseudoconvex or a  bounded convex domain 
	in $\C^n, p\in b\D,$ and $k_z(w)=K(w,z)/\sqrt{K(z,z)}$ where  $K$ is the 
	Bergman kernel of $\D$. Then  $k_z\to 0$ weakly as $z\to p$.
\end{lemma}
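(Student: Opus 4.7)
The plan is to reduce weak convergence to a dense-subspace argument and then exploit the blow-up of the Bergman kernel on the diagonal. Since $\|k_z\|_{A^2(\D)}=1$ for every $z\in\D$, the family $\{k_z\}$ is norm-bounded, so $k_z\to 0$ weakly as $z\to p$ provided $\langle k_z,f\rangle\to 0$ for every $f$ in some norm-dense subspace $S\subset A^2(\D)$. The reproducing property yields
\[
|\langle k_z,f\rangle|=\frac{|f(z)|}{\sqrt{K(z,z)}},
\]
so the problem splits into two ingredients: (a) $K(z,z)\to\infty$ as $z\to p$, and (b) the space of holomorphic functions bounded on $\Dc$ is dense in $A^2(\D)$.

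For (a) in the convex case, I would invoke a supporting real affine hyperplane of $\D$ at $p$, written as $\{\re L=0\}$ for a complex-affine $L$ with $L(p)=0$ and $\re L<0$ on $\D$, and test against the exponential family $f_t(z)=e^{tL(z)}$. Dominated convergence gives $\|f_t\|_{A^2(\D)}^2\to 0$ as $t\to\infty$, while $|f_t(z)|^2\to 1$ as $z\to p$ for each fixed $t$. Feeding this into the extremal characterization
\[
K(z,z)=\sup\left\{\frac{|f(z)|^2}{\|f\|^2}:f\in A^2(\D)\setminus\{0\}\right\}
\]
then forces $K(z,z)\to\infty$. For smooth bounded pseudoconvex domains, the analogous blow-up at every $C^1$ boundary point is classical and can be produced by Hörmander's $L^2$-estimates with a plurisubharmonic weight concentrated near $p$.

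For (b) in the convex case, fix $z_0\in\D$ and dilate: $f_r(z)=f(z_0+r(z-z_0))$ for $0<r<1$ extends holomorphically past $\Dc$ (hence is bounded there) and $f_r\to f$ in $A^2(\D)$ as $r\to 1^-$ by a standard $L^2$-continuity argument. For smooth bounded pseudoconvex domains, density of $\mathcal{O}(\Dc)\cap A^2(\D)$ in $A^2(\D)$ is standard, e.g.\ via approximation using Catlin's exhaustion by strictly pseudoconvex subdomains. Combining (a) and (b): for $g$ bounded on $\Dc$,
\[
|\langle k_z,g\rangle|\leq\frac{\|g\|_\infty}{\sqrt{K(z,z)}}\to 0,
\]
and a triangle inequality together with the uniform bound $\|k_z\|=1$ promotes this to all of $A^2(\D)$.

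The main technical obstacle is step (a) in the smooth bounded pseudoconvex case: the supporting-hyperplane trick is unavailable, so one must either invoke Hörmander-type $L^2$-solvability of $\dbar$ with a carefully chosen plurisubharmonic weight at $p$ or appeal to the known diagonal blow-up of the Bergman kernel at smooth pseudoconvex boundary points. The rest of the argument is soft: the reproducing property supplies the exact formula for $\langle k_z,f\rangle$, and the density ingredient in (b) is routine in both settings.
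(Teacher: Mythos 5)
Your overall route is the same as the paper's: normalize ($\|k_z\|=1$), reduce weak convergence to testing against a dense subspace of bounded holomorphic functions via the reproducing identity $\langle f,k_z\rangle=f(z)/\sqrt{K(z,z)}$, and then use the diagonal blow-up $K(z,z)\to\infty$ at $p$. Two remarks on where you diverge. First, your convex-case proof of the blow-up is a nice self-contained alternative: the paper simply cites Pflug and Jarnicki--Pflug, whereas your supporting-hyperplane trick with $f_t=\e^{tL}$ and the extremal characterization of $K(z,z)$ genuinely works (note $\re L<0$ on the open convex $\D$, so dominated convergence does give $\|f_t\|\to 0$). In the smooth pseudoconvex case, appealing to the known blow-up (Pflug's theorem, or a H\"ormander-weight argument) is exactly what the paper does, so that is fine as well.

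Second, the one step that would fail as justified is your density claim (b) in the smooth bounded pseudoconvex case. Density of $\mathcal{O}(\Dc)\cap A^2(\D)$ --- functions holomorphic in a neighborhood of the closure --- is \emph{not} a routine fact for general smooth bounded pseudoconvex domains, and exhausting $\D$ from inside by strictly pseudoconvex subdomains cannot produce approximants that extend past $b\D$; this is precisely the kind of subtlety illustrated by the worm domains in this very paper, whose closures lack Stein neighborhood bases. Fortunately your argument never needs holomorphic extendability, only boundedness of the approximants, and the correct statement to invoke is Catlin's theorem that $A^{\infty}(\Dc)$ (holomorphic on $\D$, smooth up to $b\D$, hence bounded) is dense in $A^2(\D)$ --- which is exactly the paper's citation, with the convex case handled by your same dilation argument. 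With that substitution your proof matches the paper's.
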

\begin{proof}
	Without loss of generality we may assume that $0\in \D$. Then 
	$A^{\infty}(\Dc)$, the space of functions holomorphic on $\D$ and smooth 
	up to the boundary, is dense in $A^2(\D)$. In case of bounded convex domain this 
	can be seen as follows:  if  $f\in A^2(\D)$ then the 
	function $f_{\delta}(z)=f((1-\delta)z)$ is holomorphic on a neighborhood of $\Dc$ 
	for any $\delta>0$ and $f_{\delta}\to f$ in $L^2$ norm as $\delta\to 0^+$.   In case 
	$\D$ is smooth bounded and pseudoconvex this is a result of 
	Catlin \cite[Theorem 3.2.1]{Catlin80}. 
	
Let $\ep>0$ be given. Then there exists $f_{\delta}\in A^{\infty}(\Dc)$ such that 
$\|f-f_{\delta}\|<\ep$. Then  
 \[|\langle f,k_z\rangle| \leq |\langle f-f_{\delta},k_z\rangle|+|\langle 
	f_{\delta},k_z\rangle|\leq \|f-f_{\delta}\|+|\langle f_{\delta},k_z\rangle|
	<\ep+|\langle f_{\delta},k_z\rangle|\]
However, we note that $\langle f_{\delta},k_z\rangle=f_{\delta}(z)/\sqrt{K(z,z)}\to 0$ 
as $z\to p$ because $K(z,z)\to \infty$ as $z\to p$ (see  
\cite[Theorem 6.1.17]{JarnickiPflugBook} and \cite{Pflug75}) and $f_{\delta}$ is bounded. 
Since $\ep$ was arbitrary we conclude that $\lim_{z\to p}\langle f,k_z \rangle= 0$ for 
any $f\in A^2(\D)$. That is, $k_z\to 0$ weakly as $z\to p$.
\end{proof}

\begin{proposition}\label{PropCn}
	Let $\D$ be a bounded convex domain in $\C^n$ and $\tau_{\D}$  denote the diameter 
	of $\D$. Assume that $b\D$ contains a non-trivial analytic variety and  $q$ is the largest 
	dimension of the analytic varieties in $b\D$. 
	\begin{itemize}
		\item[i.] If $1\leq q\leq n-1$ then 
		\[\|\dbar^*N_q\|_e \geq \frac{c(n,q)}{\tau_{\D}^{q}}
		\sup\left\{\beta_{D(w,r)}:D(w,r)\text{ is }
		q\text{-dimensional  polydisc in } b\D \text{ with } r\geq 0 \right\}\]
		where 
		\[c(n,q)=\frac{(q+1)^{q+1}(n-q)^{n-q}}{(n+1)^{n+1}}
		\left(\frac{3^{q-1}}{2^{2q+1}}\right)^{1/2}.\] 
		\item[ii.] If $q=n-1$ and $\D$ has $C^1$-smooth boundary then  
		\[\|\dbar^*N_{n-1}\|_e \geq 
		\sqrt{\frac{(n-1)!}{\pi^{n-1}}} \frac{1}{\tau_{\D}^{n-1}}\sup\left\{\alpha_M:M 
		\text{	is an affine } (n-1)\text{-dimensional variety in } b\D\right\}.\]		 
	\end{itemize}
\end{proposition}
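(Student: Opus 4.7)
The plan is to exhibit, for each $q$-dimensional polydisc $D(w, r) \subset b\D$ (respectively, for each affine $(n-1)$-dimensional variety $M \subset b\D$ in part (ii)), a sequence of $\dbar$-closed $(0,q)$-forms $\{f_j\}$ with $\|f_j\| = 1$ converging weakly to zero and with $\liminf_j \|\dbar^* N_q f_j\|$ bounded below by the claimed quantity. Since any compact operator $T$ sends a weak-null bounded sequence to a norm-null sequence, this gives $\|\dbar^* N_q - T\| \geq \liminf_j \|\dbar^* N_q f_j\|$, hence the desired lower bound on $\|\dbar^* N_q\|_e$.

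By Lemma \ref{LemDiscs} and a translation and unitary change of coordinates permitted by convexity, I may assume $w = 0$, $D(0, r) \subset \{z : z_{q+1} = \cdots = z_n = 0\}$, and $\D \subset \{z : \re z_n < 0\}$. Choose $p_j \to 0$ along the inward normal from inside $\D$, and set
\[
f_j = k_{p_j} \, d\zb_1 \wedge \cdots \wedge d\zb_q, \qquad k_{p_j}(z) = K_\D(z, p_j)/\sqrt{K_\D(p_j, p_j)}.
\]
Lemma \ref{LemWeakConv} gives $f_j \to 0$ weakly; clearly $f_j$ is $\dbar$-closed and $\|f_j\| = 1$. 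The central tool for the lower bound is the Kohn--H\"ormander dual characterization
\[
\|\dbar^* N_q f\|^2 = \langle N_q f, f\rangle \geq \frac{|\langle f, v\rangle|^2}{\|\dbar v\|^2 + \|\dbar^* v\|^2},
\]
valid for $\dbar$-closed $f$ and any $(0,q)$-form $v \in \mathrm{dom}(\dbar) \cap \mathrm{dom}(\dbar^*)$; the first equality uses $\dbar N_q f = 0$ whenever $\dbar f = 0$.

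Taking $v = \chi(z)\, d\zb_1 \wedge \cdots \wedge d\zb_q$ for a smooth real cutoff $\chi$ compactly supported in $\D$, a direct calculation gives $\|\dbar v\|^2 + \|\dbar^* v\|^2 = \tfrac14 \|\nabla \chi\|^2$, while $\langle f_j, v\rangle = \overline{P_\D \chi(p_j)}/\sqrt{K_\D(p_j, p_j)}$ by the reproducing property of $K_\D$, where $P_\D$ is the Bergman projection. I would choose $\chi$ of separable form $\chi(z) = \eta(z_1, \ldots, z_q)\,\psi(z_{q+1}, \ldots, z_n)$, with $\eta$ a paraboloidal cutoff on the polydisc $D(0, r)$ and $\psi$ an affine cutoff in the transverse directions at scale comparable to $\tau_\D$. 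Optimizing $\eta$ over such cutoffs produces the polydisc quantity $\beta_{D(w, r)}^2$ via a sharp Poincar\'e-type inequality on the polydisc; distributing the $L^2$ mass between the $q$ tangential and $n - q$ transverse directions yields the combinatorial factor $(q+1)^{q+1}(n-q)^{n-q}/(n+1)^{n+1}$; the explicit Poincar\'e constant for paraboloidal cutoffs contributes the factor $3^{q-1}/2^{2q+1}$; and the transverse scale produces the $1/\tau_\D^q$. Part (ii), in the $C^1$-smooth case with $q = n-1$, proceeds identically except that the polydisc is replaced by an arbitrary affine $(n-1)$-dimensional variety $M \subset b\D$ and $\eta$ may be taken as an arbitrary $C^1_0(\overline{M})$ function; the supremum over $\eta$ then produces the torsional rigidity $\alpha_M^2$ rather than $\beta^2$, and the $C^1$-smoothness is used only to extend $\eta$ to a full cutoff on $\D$ with bounded gradient in the single transverse direction.

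The main obstacle is the precise asymptotic evaluation of $|P_\D \chi(p_j)|^2 / K_\D(p_j, p_j)$ as $p_j \to 0$. Because the Bergman projection of a real cutoff $\chi$ need not coincide with $\chi$, extracting the leading-order value in terms of $\eta$ and the polydisc volume requires a careful localization of $P_\D$ near the polydisc and a balance against the rate at which $K_\D(p_j, p_j)$ blows up along the normal direction. The delicate combinatorial constants $c(n, q)$ emerge from this optimization together with the sharp Poincar\'e inequality for paraboloidal cutoffs on a polydisc; in part (ii), the analogous transverse calculation reduces to a standard one-dimensional integral producing $\sqrt{(n-1)!/\pi^{n-1}}$.
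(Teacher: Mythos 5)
Your overall frame (weak-null normalized sequence plus a dual variational lower bound) is reasonable, but the core step fails: pairing against a \emph{fixed} compactly supported test form $v=\chi\, d\zb_1\wedge\cdots\wedge d\zb_q$ cannot produce a nonzero bound. Indeed $\langle f_j,v\rangle=\overline{\langle P_{\D}\chi, k_{p_j}\rangle}=\overline{P_{\D}\chi(p_j)}/\sqrt{K_{\D}(p_j,p_j)}$, and since $k_{p_j}\to 0$ weakly (this is exactly Lemma \ref{LemWeakConv}, applied to the fixed function $P_{\D}\chi\in A^2(\D)$), this quantity tends to $0$. So the ``main obstacle'' you flag --- the asymptotics of $|P_{\D}\chi(p_j)|^2/K_{\D}(p_j,p_j)$ --- is not a delicate localization issue: the limit is exactly zero, and your inequality $\|\dbar^*N_qf_j\|^2\geq |\langle f_j,v\rangle|^2/(\|\dbar v\|^2+\|\dbar^*v\|^2)$ then only gives $\|\dbar^*N_q\|_e\geq 0$. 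The same objection applies verbatim to your sketch of part ii. There is a structural reason you cannot fix this by a cleverer fixed $\chi$: any weak-null sequence is annihilated in the limit by any fixed $L^2$ pairing, so the test object must either move with $j$ or fail to be an $L^2$ form on $\D$.

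The paper's proof circumvents precisely this point, and its mechanism is genuinely different from yours in three places. First, the duality is applied \emph{slice by slice}: the test form $\Phi=\chi(z')\,d\zb_1\wedge\cdots\wedge d\zb_q$ depends only on the tangential variables and vanishes only on $bM_\lambda$, and the integration by parts $\int_{M_\lambda}\langle\Phi,\dbar\dbar^*N_qF_j\rangle=\int_{M_\lambda}\langle\vartheta\Phi,\dbar^*N_qF_j\rangle$ is performed on each slice $M_\lambda\times\{z''\}$ and only afterwards integrated in $z''$ over $(1-\lambda)\D_s$; the resulting lower bound is $\|f_j\|_{(1-\lambda)\D_s}/\|\vartheta\Phi\|_{M_\lambda}$, not a pairing of $F_j$ against a fixed form on $\D$. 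Second, the $f_j$ are not normalized kernels of $\D$: in part i.\ they are normalized Bergman kernels of the transverse slice $\D_s$ extended to $\D$ by the Ohsawa--Takegoshi theorem (B{\l}ocki's version), which is what gives the global bound $\|f_j\|^2_{\D}\leq\pi^q\tau_{\D}^{2q}$ while the slice mass $\|f_j\|_{(1-\lambda)\D_s}$ is kept bounded below via Bergman kernel localization at a peak point (Fu--Straube plus Nikolov); in part ii.\ they are the functions $2^{-j}z_n^{-\alpha_j}$, whose mass in a wedge $W^{r_0}_{\pi-\ep}$ transverse to $M$ is comparable to their total mass, and the $C^1$-smoothness is used to produce that wedge, not to extend the cutoff. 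Third, since these $F_j$ need not be weak-null in $L^2(\D)$, the paper subtracts the weak limit $F$ (shown to vanish on $(1-\lambda)\D_s$) at the cost of a factor $2$, which is where part of the constant $2^{2q+1}$ comes from; the radial symmetry of $\chi$ in each $z_k$, combined with the mean value property of the antiholomorphic $\overline{f_j}$, is also essential for the slicewise identity $\int_{M_\lambda}\chi\,\overline{f_j(\cdot,z'')}=\overline{f_j(0,z'')}$ in part i., a point your paraboloidal-cutoff optimization does not engage with. Without replacing your fixed global test form by some such moving or slicewise construction, the argument as proposed does not prove the Proposition.
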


\begin{remark}
	Let $r=(r_1,\ldots,r_n)$ with $r_j>0$ for $1\leq j\leq n$ and  
	$D(0,r)=\{z\in \C^n:|z_j|<r_j,1\leq j\leq n\} $ be a  polydisc. Using 
\eqref{Eqn6} in the proof of Proposition \ref{PropCn} one  
	can estimate 
	\[\alpha_{D(0,r)}\geq \sqrt{\frac{3^{n-1}\pi^n}{2^{2n-1}}} 
	\frac{\prod_{k=1}^n r_k}{\sqrt{\sum_{k=1}^n \frac{1}{r_k^2}}}.\] 
\end{remark}

\begin{proof}[Proof of Proposition \ref{PropCn}]
Let us prove i. first. Assume that $1\leq q\leq n-1$ is the largest dimension of 
analytic varieties in the boundary of $\D$. Then Lemma \ref{LemDiscs} implies 
that there is an affine $q$-dimensional analytic variety in $b\D$. By applying a 
holomorphic affine transformation if necessary, we may assume that 
\begin{itemize}
 \item[a.] $b\D$ contains a nontrivial $q$-dimensional polydisc 
$M=D(0,r)$ where $1\leq q\leq n-1$ and $r=(r_1,\ldots, r_q)$ for 
$r_j>0$ for all $j$, 
\item[b.] $M\times\{0\}\subset b\D\cap \{z\in \C^n:z_{q+1}=\cdots =z_n=0\}$,
\item[c.] there are no analytic discs in $b\D$ transversal to $M$. 
\end{itemize}
 Let $M_{\lambda}=\lambda M\subset M$ for $0<\lambda<1$ and let us 
denote  $z'=(z_1,\ldots, z_q)$ and $z''=(z_{q+1},\ldots,z_n)$ for $z=(z_1,\ldots, z_n)$.  
Assume that 
\[\D\subset \{z'\in \C^{q}:\|z'\|<\tau_{\D}\}\times \{z''\in \C^{n-q}:\|z''\|< 
\tau_{\D}, \re(z_n)>0\}.\] 
Let $\{p_j\}\subset \D_s=\{z''\in \C^{n-q}:(0,z'')\in \D\}$ be a sequence (to be 
determined later) converging to the origin  and 
\[\widetilde{f}_j(z'')=\frac{K_{\D_s}(z'',p_j)}{\sqrt{K_{\D_s}(p_j,p_j)}}.\] 
Then $\|\widetilde{f}_j\|_{\D_s}=1$ and Lemma \ref{LemWeakConv} implies that 
the sequence  $\{f_j\}$ converges to zero weakly as $p_j\to 0 \in b\D_s$. One can show 
that convexity of $\D$ implies that $M_{\lambda}\times (1-\lambda)\D_s\subset \D$.

We use \cite[Theorem 1]{Blocki13} (a version of Ohsawa-Takegoshi Theorem 
\cite{OhsawaTakegoshi87}) repeatedly $q$ times  with $D= \{z\in \C:|z|<\tau_{\D}\}$ 
and the fact that $c_D(0)=\sqrt{\pi K_D(0)}=1/\tau_{\D}$ to extend 
$\widetilde{f}_j$'s to $\D$, we call the extension $f_j$, such that  
\begin{align}\label{Eqn4}
\|f_j\|^2_{\D}\leq \pi^q\tau_{\D}^{2q}
\end{align} 
 and $f_j(0,z'')=\widetilde{f}_j(z'')$. Let
\[\chi_j(\xi)=\frac{2}{\pi 
\lambda^2r_j^2}\left(1-\frac{|\xi|^2}{\lambda^2r_j^2}\right) 
\text{ for } \xi\in \C.\]
Then $\chi_j\in C^{\infty}(\C)$ and $\chi_j(\xi)=0$ for $|\xi|=\lambda r_j$. 
Using polar coordinates, one can compute that 
\begin{align*} 
\int_{\{|\xi|<\lambda r_j\}}\chi_j(\xi) dV(\xi)&=1, \\ 
\int_{\{|\xi|<\lambda r_j\}}|\chi_j(\xi)|^2 dV(z)
&=\frac{4}{3\pi\lambda^2r_j^2},\\ 
\int_{\{|\xi|<\lambda r_j\}}|(\chi_j)_{\xi}(\xi)|^2 dV(\xi) 
&=\frac{2}{\pi \lambda^4r_j^4}.
\end{align*}
Let  
\[F_j=f_j d\zb_1\wedge\cdots\wedge d\zb_q \text{ and } 
\Phi =\chi d\zb_1\wedge\cdots\wedge d\zb_q\]  where 
$\chi(z')=\chi_1(z_1)\cdots \chi_q(z_q)$. Assume that $\vartheta$ denotes the 
formal adjoint on $\dbar$. Then 
\[\vartheta \Phi = -\sum_{k=1}^q (-1)^{k-1} 
\frac{\partial\chi}{\partial z_k} d\zb_1\wedge \cdots\wedge 
\widehat{d\zb_k}\wedge \cdots \wedge d\zb_q \]
where $\widehat{d\zb_k}$ means that $d\zb_k$ is missing.  Then 
\begin{align*}
\int_{M_{\lambda}}|\chi_{z_j}(z')|^2dV(z')
=& \int_{\{|\xi|<\lambda r_j\}}|(\chi_j)_{z_j}(\xi)|^2 dV(\xi) 
\prod_{k\neq j}^q\int_{\{|\xi|<\lambda r_k\}}|\chi_{k}(\xi)|^2 dV(\xi)\\
=& \frac{2}{\pi \lambda^4r_j^4}\prod_{k\neq j}^q\frac{4}{3\pi\lambda^2r_k^2}\\
=&\frac{2^{2q-1}}{3^{q-1}\pi^q\lambda^{2q+2}}\frac{1}{r_j^2} \prod_{k=1}^q 
\frac{1}{r_k^2}.
\end{align*}
Then 
\begin{align}\label{Eqn6}
\|\vartheta \Phi \|^2_{M_{\lambda}}
 = \frac{2^{2q-1}}{3^{q-1}\pi^q\lambda^{2q+2}} 
 \frac{\sum_{k=1}^q \frac{1}{r_k^2}}{\prod_{k=1}^q r_k^2}.
\end{align}

Now we will derive an integration by parts formula for $F_j$'s. 
Let $G=gd\zb_1\wedge\cdots\wedge d\zb_q$ where $g$ is a square integrable 
holomorphic function on $\D$. Then 
\[\dbar^*N_qG=\sum_{k=1}^q g_k
d\zb_1\wedge \cdots\wedge \widehat{d\zb_k}\wedge \cdots \wedge 
d\zb_q+H\] 
where $g_k$'s are square integrable functions and $H$ is a $(0,q-1)$-form that 
includes terms containing $d\zb_j$ for some $j\geq q+1$ (in case of $q=1$ the 
form $H$ is zero). Since $G$ is a $\dbar$-closed form $G=\dbar\dbar^*N_qG$. 
Then by comparing the types of forms in $G$ and 
$\dbar\dbar^*N_qG$ we conclude that 
\[\dbar\dbar^*N_qG=\sum_{k=1}^q(-1)^{k-1}
\frac{\partial g_k}{\partial \zb_k} d\zb_1\wedge \cdots\wedge d\zb_q.\] 
Then 
\begin{align*}
 \int_{M_{\lambda}} \langle\vartheta \Phi,  \dbar^*N_qG\rangle 
 = & -\sum_{k=1}^q(-1)^{k-1}\int_{M_{\lambda}}\frac{\partial \chi}{\partial 
z_k}\overline{g_k} \\
=&\sum_{k=1}^q(-1)^{k-1}\int_{M_{\lambda}}\chi \overline{\frac{\partial 
g_k }{\partial \zb_k}}\\
=& \int_{M_{\lambda}}  \langle \Phi,  \dbar\dbar^*N_qG\rangle.
\end{align*}
Now we apply the equality above to $F_j$'s. For a fixed $j$ we have 
\[\int_{M_{\lambda}}  \langle \Phi,  \dbar\dbar^*N_qF_j\rangle
=\int_{M_{\lambda}} \langle\vartheta \Phi,  \dbar^*N_qF_j\rangle.\]
Then (we emphasize the variables in the first line below) using the 
fact that $\int_{M_{\lambda}}\chi(z')dV(z')=1$ in the first equality below we 
get 
\begin{align*}
\overline{f_j(0,z'')}=&\overline{f_j(0,z'')}\int_{M_{\lambda}}\chi(z')dV(z')\\
=&\int_{M_{\lambda}}\chi(z') \overline{f_j(z',z'')}dV(z') \\
 =& \int_{M_{\lambda}} \langle\Phi, \dbar \dbar^*N_qF_j\rangle \\
=&\int_{M_{\lambda}} \langle\vartheta \Phi,  \dbar^*N_qF_j\rangle \\
\leq& \|\vartheta \Phi\|_{M_{\lambda}}\|\dbar^*N_qF_j\|_{M_{\lambda}}.
\end{align*}
We take the norm square of both sides and integrate in $z''$ variables on 
$(1-\lambda)\D_s$ to get 
\begin{align}\label{Eqn5}
\|\dbar^*N_qF_j\| \geq \|\dbar^*N_qF_j\|_{M_{\lambda}\times 
(1-\lambda)\D_s}\geq  \frac{\|f_j\|_{(1-\lambda)\D_s}}{\|\vartheta \Phi 
\|_{M_{\lambda}}}. 
\end{align}

Now we will compute $\|f_j\|_{(1-\lambda)\D_s}$. Let us apply the reproducing 
property of $K_{(1-\lambda)\D_s}(p_j,.)$ to $K_{\D_s}(.,p_j)$ on 
$(1-\lambda)\D_s$. 
\begin{align*}
K_{\D_s}(p_j,p_j)=&\int_{(1-\lambda)\D_s}K_{(1-\lambda)\D_s}(p_j,z'')K_{
\D_s}(z'',p_j)dV(z'')\\
\leq& \|K_{(1-\lambda)\D_s}(p_j,.)\|_{(1-\lambda)\D_s}\|K_{\D_s}(.,p_j)\|_{
(1-\lambda)\D_s}\\
=& \sqrt{K_{(1-\lambda)\D_s}(p_j,p_j)} \|K_{\D_s}(.,p_j)\|_{
(1-\lambda)\D_s}.
\end{align*}
We used the Cauchy-Schwarz inequality for the inequality on the second line. 
Namely, we get  
\[\frac{K_{\D_s}(p_j,p_j)}{K_{(1-\lambda)\D_s}(p_j,p_j)}
\leq \frac{\|K_{\D_s}(.,p_j)\|^2_{(1-\lambda)\D_s}}{K_{\D_s}(p_j,p_j)}.\]
Hence if we write the right hand side above in terms of $f_j$ and use 
\[K_{(1-\lambda)\D_s}(p_j,p_j)
=\frac{1}{(1-\lambda)^{2(n-q)}}K_{\D_s}\left(\frac{p_j}{1-\lambda}, 
\frac{p_j}{1-\lambda}\right)\]
we get 
\begin{align}\label{Eqn7}
\|f_j\|^2_{(1-\lambda)\D_s}
\geq \frac{K_{\D_s}(p_j,p_j)}{K_{(1-\lambda)\D_s}(p_j, p_j) }
=(1-\lambda)^{2(n-q)}\frac{K_{\D_s}(p_j,p_j)}{K_{\D_s}\left(\frac{p_j}{
1-\lambda} , \frac{p_j}{1-\lambda}\right)}. 
\end{align}

Let $\delta>0, \alpha=\frac{\lambda\delta}{1-\lambda}$, and 
$\rho_{\delta}=\delta \rho_0$ where $\rho_0=(0,\ldots,0,1)$. We note that 
$\rho_{\delta}\in \D_s$ for small $\delta>0$. Let us define 
$T_{\alpha}(z)=z+(0,\ldots,0,\alpha)$. Since $\D_s$ is convex, we can choose 
$U$ small enough so that $T_{\alpha}(\D_s\cap U)\subset \D_s$. Since there is 
no analytic disc in the boundary of $\D_s$ through $\rho_0$, 
\cite[Proposition 3.2]{FuStraube98} implies that $\rho_0$ is a peak point  and 
in turn \cite[Theorem 2]{Nikolov02} 
implies that  for $\ep>0$ there exists $\delta_{\ep}>0$ such that 
$0<\delta<\delta_{\ep}$ implies that 
\[K_{\D_s}(\rho_{\delta},\rho_{\delta}) \geq 
(1-\ep)K_{\D_s\cap U}(\rho_{\delta},\rho_{\delta}).\]
Furthermore, we have 
\[K_{\D_s\cap U}(\rho_{\delta},\rho_{\delta})
=K_{T_{\alpha}(\D_s\cap U)}(\rho_{\delta+\alpha},\rho_{\delta+\alpha})
\geq K_{\D_s}(\rho_{\delta+\alpha},\rho_{\delta+\alpha}).\]
Therefore,
\begin{align}\label{Eqn8}
K_{\D_s}(\rho_{\delta},\rho_{\delta}) \geq 
(1-\ep) K_{\D_s}(\rho_{\delta+\alpha},\rho_{\delta+\alpha}). 
\end{align}
Now we choose $\{p_j\}$ as $p_j=\rho_{1/j}=(0,\ldots,0,1/j)$. 
Note that $\delta+\alpha=\delta/(1-\lambda)$. Let us choose $\delta=1/j$. Then 
$\rho_{\delta+\alpha}=\rho_{\delta}/(1-\lambda)=p_j/(1-\lambda)$. 
Furthermore, the fact that $\ep$ is arbitrary and \eqref{Eqn8} imply that  
\[\liminf_{j\to \infty}\frac{K_{\D_s}(p_j,p_j)}{K_{\D_s}\left(\frac{p_j}{
1-\lambda } , \frac{p_j}{1-\lambda}\right)}\geq 1.\]
Then \eqref{Eqn6} and \eqref{Eqn7} imply that 
\begin{align}
\label{Eqn1}
\liminf_{j\to \infty} \frac{\|f_j\|^2_{(1-\lambda)\D_s}}{\|\vartheta \Phi 
\|^2_{M_{\lambda}}} 
 \geq \lambda^{2q+2}(1-\lambda)^{2n-2q} 
  \frac{3^{q-1}\pi^q \prod_{k=1}^qr_k^2}{2^{2q-1} \sum_{k=1}^q \frac{1}{r_k^2}}.
  \end{align}
Now we want to find 
\[\sup\left\{\lambda^{2q+2}(1-\lambda)^{2n-2q}:0\leq \lambda\leq 1\right\}.\] 
One can compute that the maximum of 
$f(\lambda)=\lambda^{2q+2}(1-\lambda)^{2n-2q}$ over the closed interval $[0,1]$ 
is attained at $\lambda=(q+1)/(n+1)$ and it is
\[\frac{(q+1)^{2q+2}(n-q)^{2n-2q}}{(n+1)^{2n+2}}.\]

For the rest of the proof of i. we fix $\lambda =(q+1)/(n+1)$. 
For any $\ep>0$ given we choose a compact operator 
$K_{\ep}:K^2_{(0,q)}(\D)\to L^2_{(0,q)}(\D)$ such that 
\[\|\dbar^*N_q -K_{\ep}\|<\|\dbar^*N_q\|_e+\ep.\] 
Let us choose a subsequence of $\{F_j\}$, if necessary, so that $F_j\to F$ weakly. 
One can show that $F=fd\zb_1\wedge\cdots \wedge d\zb_q$ 
for some $f\in A^2(\D)$. One can also show that $\|F\|\leq 
\liminf_{j\to\infty}\|F_j\|$. Furthermore,   $K_{\ep}(F_j-F)\to 0$ and 
\[\|\dbar^*N_q\|_e\geq \limsup_{j\to \infty} 
\frac{\|\dbar^*N_q(F_j-F)\|}{\|F_j-F\|}-\ep\geq 
 \limsup_{j\to \infty}\frac{\|\dbar^*N_q(F_j-F)\|}{2\|F_j\|}-\ep.\]
 We note that $F|_{(1-\lambda)\D_s}=0$.
This can be seen as follows: Let $K_{\D}$ denote the Bergman kernel of $\D$ 
and  $z\in (1-\lambda)\D_s$. We remind the reader that Lemma \ref{LemWeakConv} 
implies that $\{f_j\}$ converges to zero weakly. Then  
\[F(z)=\langle F,K_{\D}(.,z)\rangle=\lim_{j\to \infty} \langle F_j,K_{\D}(.,z)\rangle
=\lim_{j\to \infty} f_j(z)=0.\]     
Hence $\|f_j-f\|_{(1-\lambda)\D_s}=\|f_j\|_{(1-\lambda)\D_s}$ and using 
\eqref{Eqn4},\eqref{Eqn5},\eqref{Eqn1} we get
\[\|\dbar^*N_q\|^2_e\geq \limsup_{j\to 
\infty}\frac{\|f_j\|^2_{(1-\lambda)\D_s}}{2^2\pi^{q}\tau_{\D}^{2q}
\|\vartheta\Phi\|^2_{M_{\lambda}}}-\ep. \]
Since $\ep$ is arbitrary we get
\[\|\dbar^*N_q\|_e^2 \geq 
\frac{(q+1)^{2q+2}(n-q)^{2n-2q}}{(n+1)^{2n+2}}
\frac{3^{q-1}\prod_{k=1}^qr_k^2}{2^{2q+1} 
\tau_{\D}^{2q}\sum_{k=1}^q \frac{1}{r_k^2}}
=\frac{(c(n,q))^2}{\tau_{\D}^{2q}}\beta_{D(w,r)}^2.\]
This finishes the proof of the first part. 

Now we will prove the case  $q=n-1$.  In this case we denote 
$z=(z',z_n)\in \C^n$ where $z'=(z_1,\ldots,z_{n-1})\in \C^{n-1}$. By 
using translation and rotation if necessary, without loss of generality, we 
may assume that 
\begin{itemize}
 \item[i.] $\D\subset \{z' \in \C^{n-1}:\|z'\|<\tau_{\D}\}\times 
\{z_n\in \C:|z_n|< \tau_{\D}, \re(z_n)>0\}$,
\item[ii.] $M=\{z'\in \C^{n-1}:(z',0)\in b\D\}$ is $(n-1)$-dimensional 
affine variety.
\end{itemize}
Since in this case we assume that $\D$ has $C^1$-smooth boundary for $\ep>0$ 
there exists a wedge 
\[W^{r_0}_{\pi-\ep}=\left\{r\e^{i\theta}\in \C:0\leq 
r<r_0,|\theta|<\frac{\pi-\ep}{2}\right\}\]
such that $M\times W^{r_0}_{\pi-\ep}\subset \D$. 
We choose 
\[f_j(z)=\frac{1}{2^jz_n^{\alpha_i}}\]
where $\alpha_j=1-2^{-2j-1}$. Then $f_j\to 0$ weakly  in $L^2(\D)$ and using i. 
above one can compute that 
\[\|f_j\|^2\leq \frac{1}{2^{2j}} 
\int_{\|z'\|<\tau_{\D}}dV(z') 
\int_{-\pi/2}^{\pi/2}d\theta \int_0^{\tau_{\D}}\frac{dr}{r^{2\alpha_j-1}}
=\pi \omega_{2n-2}\tau_{\D}^{2n-2}\tau_{\D}^{2-2\alpha_j} \] 
where $\omega_{2n-2}$ denotes the volume of the unit ball in 
$\mathbb{R}^{2n-2}$.  We also need to compute $\|f_j\|_{W^{r_0}_{\pi-\ep}}$
\[\|f_j\|^2_{W^{r_0}_{\pi-\ep}}\geq\frac{1}{2^{2j}} 
\int_{-(\pi-\ep)/2}^{(\pi-\ep)/2}d\theta 
\int_0^{r_0}\frac{dr}{r^{2\alpha_j-1}}
=(\pi-\ep)r_0^{2-2\alpha_j}.\] 
Let 
\[F_j=f_j d\zb_1\wedge\cdots\wedge d\zb_{n-1} \text{ and } 
\Phi =\chi d\zb_1\wedge\cdots\wedge d\zb_{n-1}\]  
where $\chi\in C^1(\overline{M})$ not identically zero and $\chi=0$  
on the boundary of $M$. Then for $z_n\in W^{r_0}_{\pi-\ep}$ we have 
 \begin{align*}
 \frac{1}{2^jz_n^{\alpha_j}} \int_{M}\chi(z')dV(z') 
 =& \int_M\chi(z') f_j(z)dV(z') \\ 
 =& \int_M \langle\Phi, \dbar \dbar^*N_{n-1}F_j\rangle \\
=&\int_M \langle\vartheta \Phi,  \dbar^*N_{n-1}F_j\rangle \\
\leq& \|\vartheta \Phi\|_M\|\dbar^*N_{n-1}F_j\|_M.
\end{align*}
We note that, unlike the previous case, in the computations above $\chi$ is not 
necessarily radially symmetric. Then by integrating in the last variable we get 
\begin{align}\label{Eqn9}
\|\dbar^*N_{n-1}F_j\| \geq \|\dbar^*N_{n-1} F_j\|_{M\times W^{r_0}_{\pi-\ep}}
\geq  \frac{\int_M\chi(z')dV(z')}{\left\| \vartheta \Phi 
\right\|_M} \|f_j\|_{W^{r_0}_{\pi-\ep}}. 
\end{align}
Furthermore, the fact that  $\left\| \vartheta \Phi 
\right\|_M=\|\nabla \chi\|_M/2$ and the estimate 
$\|f_j\|^2_{W^{r_0}_{\pi-\ep}}\geq (\pi-\ep)r_0^{2-2\alpha_j}$ imply that 
\begin{align*}
\|\dbar^*N_{n-1}F_j\| \geq \|\dbar^*N_{n-1} F_j\|_{M\times W^{r_0}_{\pi-\ep}}
\geq  \frac{2\int_M\chi(z')dV(z')}{\left\| \nabla \chi \right\|_M} 
\sqrt{(\pi-\ep)r_0^{2-2\alpha_j}}. 
\end{align*}
Finally,
\begin{align*}
\|\dbar^*N_{n-1}\|_e\geq & \limsup_{j\to \infty} 
\frac{\|\dbar^*N_{n-1} F_j\|}{\|F_j\|} \\
\geq & \frac{2\int_M\chi(z')dV(z')}{\left\| \nabla \chi \right\|_M}  
\limsup_{j\to\infty}\sqrt{\frac{(\pi-\ep)r_0^{2-2\alpha_j}}{\pi 
\omega_{2n-2}\tau_{\D}^{2n-2}\tau_{\D}^{2-2\alpha_j}}} \\
=&\frac{2\int_M\chi(z')dV(z')}{\left\| \nabla \chi \right\|_M}  
\sqrt{\frac{\pi-\ep}{\pi \omega_{2n-2}\tau_{\D}^{2n-2}}}.
\end{align*}
Since $\ep$ was arbitrary we get 
\[\|\dbar^*N_{n-1}\|_e\geq \frac{2\int_M\chi(z')dV(z')}{\left\| \nabla \chi 
\right\|_M}  
\frac{1}{\sqrt{\omega_{2n-2}}\tau_{\D}^{n-1}}.\]
Therefore, taking supremum over $\chi$ and using the fact that 
$\omega_{2n-2}=\pi^{n-1}/(n-1)!$ we get 
\[\|\dbar^*N_{n-1}\|_e\geq \alpha_{M}
\sqrt{\frac{(n-1)!}{\pi^{n-1}}}\frac{1}{\tau_{\D}^{n-1}}\]
for every $M$. 
\end{proof}

\begin{remark} 
	The proof of iii. of Theorem \ref{ThmCn}  can be modified to 
work on product domains even though they do not have $C^1$-smooth boundary. 
	Let $U$ be a bounded pseudoconvex domain in $\C^n$ and 
	$\D_r=\{z\in \C:|z|<r\}\times U$. Then the proof of iii. in Theorem 
\ref{ThmCn} modified to work on $\D_r$ with $M=U$ implies the 	following 
essential norm estimate of $N_n$ on $\D_r$:
	\[\|N_n\|_e\geq \alpha_{U}^2 \frac{n!}{\pi^n\tau_{\D_r}^{2n}}.\]
Combining this estimate with H\"{o}rmander's estimate for the norm of $N_n$ 
on $\D_r$  we get  
	\[ \alpha_{U}^2 \frac{n!}{\pi^n\tau_{\D_r}^{2n}}\leq \|N_{n}\|\leq 
\e\frac{\tau_{\D_r}^2}{n}\] 
 Then letting $r$ go to zero (hence $\tau_{\D_r}\to \tau_U$) we get 
	\[\alpha_{U}\leq 
\frac{\tau_{U}^{n+1}}{n}\sqrt{\frac{\e\pi^n}{(n-1)!}}.\]
When $U=\mathbb{D}$ this inequality is not sharp because  
	$4\sqrt{\e\pi}>\alpha_{\mathbb{D}}=\sqrt{\pi/2}$ (see Remark \ref{Rmk2}). 
In case $U$ is a simply connected domain in the complex plane it is known that 
\[\alpha_U\leq \frac{V(U)}{\sqrt{2\pi}}.\] 	
This is known as Saint-Venant’s inequality (see 
\cite[pg 121]{PolyaSzegoBook} and also 
\cite{Makai66,BellFergusonLundberg14,FleemanKhavinson15}).
\end{remark}

Now we are ready to prove Theorem \ref{ThmCn}. 

\begin{proof}[Proof of Theorem \ref{ThmCn}]
The proof of i. follows from Corollary \ref{CorPercolate} and  the fact that if 
the boundary of a bounded convex domain $\D$ does not contain any analytic 
variety of dimension greater than or equal to $q\geq 1$ then $N_q$ on $\D$ is compact 
 \cite[Theorem 1.1]{FuStraube98}. 

To prove ii. let us assume that $1\leq q\leq q_{\D}\leq n-1$. The fact that $\dbar N_q$ 
is compact for $q\geq q_{\D}+1$ together with i. in Proposition \ref{PropCn} and 
the first equation in \eqref{EqnRange} imply that 
\begin{align*}
\|N_{q_{\D}}\|_e=&\|\dbar^*N_{q_{\D}}\|_e^2\\
\geq& \frac{C(n,q_{\D})}{\tau_{\D}^{2q_{\D}}}
\sup\Big\{\beta_{D(w,r)}^2:D(w,r)\text{ is } q_{\D}
\text{-dimensional  polydisc in } b\D \text{ with } r\geq 0 \Big\}
\end{align*}
where 
\[C(n,q_{\D})=\frac{(q_{\D}+1)^{2q_{\D}+2}(n-q_{\D})^{2n-2q_{\D}}}{(n+1)^{2n+2}}
\frac{3^{q_{\D}-1}}{2^{2q_{\D}+1}}.\]
Then Corollary \ref{CorPercolate} implies that  
\[\|N_q\|_e\geq \frac{C(n,q_{\D})}{\tau_{\D}^{2q_{\D}}}
\sup\Big\{\beta_{D(w,r)}^2:D(w,r)\text{ is }
q_{\D}\text{-dimensional  polydisc in } b\D \text{ with } r\geq 0 \Big\}\]
for $1\leq q\leq q_{\D}$. 

The proof of iii. is similar to the proof of ii. The only difference is that we use the 
essential norm estimate for $\dbar^*N_{n-1}$ in ii. in Proposition \ref{PropCn}.
\end{proof}

The following lemma will be used to compute $\alpha_{\D}$ in case $\D$ is an 
annulus in $\C$. 

\begin{lemma} \label{LemTorsionalRigidity}
 Let $\D$ be a $C^1$-smooth bounded domain in $\C$ and $u\in C^2(\D)\cap 
C(\Dc)$ be the real valued function satisfying the following properties: 
$u=0$ on $b\D$ and $u_{z\zb}=-1$ on $\D$. Then 
\[\alpha_{\D}=\frac{\int_{\D} u(z)dV(z)}{\|u_z\|}=\|u_z\|.\]
\end{lemma}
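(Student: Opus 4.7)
The plan is to convert the ratio defining $\alpha_{\D}$ into complex coordinates, use $u$ as a natural test function via integration by parts, and then invoke Cauchy--Schwarz to obtain the matching upper bound.

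First I would simplify the gradient norm. Since $\chi$ is real, $\chi_{\zb}=\overline{\chi_z}$, so $|\nabla\chi|^2=\chi_x^2+\chi_y^2=4|\chi_z|^2$; hence $\|\nabla\chi\|=2\|\chi_z\|$ and
\[
\alpha_{\D}=\sup\left\{\frac{\int_{\D}\chi\,dV}{\|\chi_z\|}:\chi\in C^1_0(\Dc),\ \chi\not\equiv 0\right\}.
\]
Next, I would use the defining equation $u_{z\zb}=-1$ to rewrite the numerator. For any $\chi\in C^1_0(\Dc)$, since $\chi$ vanishes on $b\D$ and $\D$ is $C^1$-smooth, integration by parts (in the $\zb$-variable against $u_z$) yields
\[
\int_{\D}\chi\,dV=-\int_{\D}\chi\,u_{z\zb}\,dV=\int_{\D}\chi_{\zb}\,u_z\,dV.
\]
A short check via conjugation (and a second integration by parts) confirms that the right-hand side is real, matching the real left-hand side.

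Then I would apply the Cauchy--Schwarz inequality:
\[
\int_{\D}\chi\,dV=\int_{\D}\overline{\chi_z}\,u_z\,dV\leq \|\chi_z\|\,\|u_z\|,
\]
which gives $\alpha_{\D}\leq \|u_z\|$. For the reverse inequality I would plug in $\chi=u$, obtaining $\int_{\D}u\,dV=\int_{\D}u_{\zb}u_z\,dV=\|u_z\|^2$, so that the ratio equals $\|u_z\|$. This simultaneously establishes $\alpha_{\D}=\|u_z\|$ and the identity $\int_{\D}u\,dV/\|u_z\|=\|u_z\|$.

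The main technical obstacle is justifying that $u$ itself is an admissible test function, since the hypothesis only places $u$ in $C^2(\D)\cap C(\Dc)$ rather than $C^1_0(\Dc)$. I would handle this by a standard approximation: exhaust $\D$ by $C^1$-smooth subdomains $\D_\ep\uparrow\D$ on each of which $u\in C^1(\overline{\D_\ep})$, apply a cutoff to produce genuine test functions $\chi_\ep\in C^1_0(\Dc)$ converging to $u$ in the relevant norms, and pass to the limit; alternatively, one can invoke elliptic regularity up to the $C^1$ boundary for the Poisson equation $\Delta u=-4$ to obtain $u\in C^1(\Dc)$ directly. Either way, the lower bound $\alpha_{\D}\geq\|u_z\|$ is preserved in the limit and combined with the Cauchy--Schwarz upper bound completes the proof.
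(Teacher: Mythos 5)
Your proof is correct and follows essentially the same route as the paper: rewrite $\|\nabla\chi\|=2\|\chi_z\|$, integrate by parts against $u_{z\zb}=-1$ and apply Cauchy--Schwarz for $\alpha_{\D}\leq\|u_z\|$, then take $\chi=u$ for the reverse inequality. Your extra remark on justifying $u$ (and the boundary terms) when $u$ is only assumed $C^2(\D)\cap C(\Dc)$ addresses a point the paper passes over silently, and your proposed fixes (exhaustion or boundary elliptic regularity for $\Delta u=-4$) are adequate.
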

\begin{proof}
 Using the fact that $u$ is real valued together with integration by parts we 
get 
 \[\|u_z\|^2=\int_{\D} u_z(z)u_{\zb}(z)dV(z)
=-\int_{\D}u(z)u_{z\zb}(z)dV(z)=\int_{\D}u(z)dV(z).\]
Also one can check that $\|\nabla u\| = 2\|u_z\|$. Then 
$\alpha_{\D}\geq \frac{\int_{\D} u(z)dV(z)}{\|u_z\|}=\|u_z\|$.

To get the converse. Let $f\in C^2(\D)\cap C(\Dc)$ be a real valued function 
that vanishes on $b\D$ and  $f\not\equiv 0$. Then 
\[\int_{\D} f(z)dV(z)=-\int_{\D} f(z)u_{z\zb}(z)dV(z)
=\int f_z(z)u_{\zb}(z)dV(z)\leq\|f_z\| \|u_z\|.\]
That is, $\frac{\int_{\D} f(z)dV(z)}{\|f_z\|} \leq \|u_z\|$. 
Taking supremum over $f$ we get $\alpha_{\D}\leq \|u_z\|$. 
\end{proof}

\begin{remark}\label{Rmk2}
	Let $\mathbb{D}_r$ be the open disc with radius $r$. Then one can compute 
	$\alpha_{\mathbb{D}_r}=\sqrt{\pi/2}r^2$ because in this case $u(z)=r^2-|z|^2$.
	
	Now let us compute $\alpha_{A_r}$ for the annulus 
	$A_r=\{z\in \C: 1<|z|<r\}$.  The function  
	\[u(z)=r^2-|z|^2-\frac{r^2-1}{\log r} 
	(\log r-\log|z|)\]
	satisfies the conditions in the lemma. That is, $u_{z\zb}=-1$ on $A_r$ and 
	$u=0$ on $bA_r$. Then
	\[u_z=-\zb+\frac{r^2-1}{\log r}\frac{1}{2z}\]
	and one can compute that 
	\[\|u_z\|^2=\frac{\pi}{2}\left(r^4-1-\frac{(r^2-1)^2}{\log r}\right).\]
	We note that this is $P'$ in \cite[pg 103]{PolyaSzegoBook}. Therefore  
	Lemma \ref{LemTorsionalRigidity} implies that 
	\begin{align}\label{EqnAnnulus}
	\alpha_{A_r}=\sqrt{\frac{\pi}{2}\left(r^4-1-\frac{(r^2-1)^2}{\log r}\right)}.
	\end{align}  
\end{remark}

Now we are ready to prove Theorem \ref{ThmWorm}. 

\begin{proof}[Proof of Theorem \ref{ThmWorm}]
Let us denote $A_a^b=\{\xi\in \C:a<|\xi|<b\}$ and assume that 
$1<\eta <\min\{\e^{\pi/2\beta},r\}$. Since $2\beta\log \eta \in(0,\pi)$ 
for every $0<\ep<\pi-2\beta\log\eta$ there exists $\delta>0$ such that 
we can put a  wedge with angle $\pi-2\beta \log\eta-\ep$ and radius $\delta>0$ 
in the domain that is perpendicular to $A_{1+\ep}^{\eta-\ep}$. More precisely, 
\[W^{\delta}_{\pi-2\beta\log \eta-\ep}\times A_{1+\ep}^{\eta-\ep}
\subset \D_{\beta,r}\cap \{(z_1,z_2)\in \C^2:1<|z_2|<\eta\}\] 
We can also put $\D_{\beta,r}\cap \{(z_1,z_2)\in \C^2:1<|z_2|<\eta\}$ 
in a similar product space. Hence we have 
\[W^{\delta}_{\pi-2\beta\log \eta-\ep}\times A_{1+\ep}^{\eta-\ep}\subset 
\D_{\beta,r}\cap \{(z_1,z_2)\in \C^2:1<|z_2|<\eta\} 
\subset W^{2}_{\pi+2\beta \log \eta}\times A_{1}^{\eta}\]
We use the same sequence of functions 
\[f_j(z_1,z_2)=\frac{1}{2^{j}z_1^{\alpha_j}}\] 
as in the proof of Theorem \ref{ThmCn}. Let  $\chi_{\eta}$ be a function 
independent of $z_1$ such that 
$\chi_{\eta}(z_2)=1$ if $1\leq |z_2|\leq \eta$ and 
$\chi_{\eta}(z_2)=0$ otherwise. Then  
\[\|f_j\|^2_{W^{\delta}_{\pi-2\beta\log \eta-\ep}}
=(\pi-2\beta\log \eta-\ep)\delta^{2-2\alpha_j} \text{ and } 
\|\chi_{\eta}f_j\|^2\leq \pi(\pi+2\beta \log \eta)(\eta^2-1)2^{2-2\alpha_j}.\]

Let $\chi\in C^1(\overline{A_{1+\ep}^{\eta-\ep}})$ be a real valued function such that 
$\chi\equiv 0$ on the boundary of $A_{1+\ep}^{\eta-\ep}$ and 
$\int_{A_{1+\ep}^{\eta-\ep}}\chi(z_2)dV(z_2)=1$. We think of $\chi$ as 
a function of $z_2$. Then by similar computations as in 
\eqref{Eqn9} for $F_j=\chi_{\eta}f_jd\zb_2$ we get  
\[\|\dbar^*N_1(\chi_{\eta}f_jd\zb_2)\|\geq 
\frac{2\|f_j\|_{W^{\delta}_{\pi-2\beta\log \eta-\ep}}}{\|\nabla\chi\|_{A_{1+\ep}^{\eta-\ep}}}
=\frac{2}{\|\nabla\chi\|_{A_{1+\ep}^{\eta-\ep}}}\delta^{1-\alpha_j} 
\sqrt{\pi-2\beta\log \eta-\ep}.\]
Then 
\begin{align*} 
\|\dbar^*N_1\|_e\geq & 
\limsup_{j\to \infty}\frac{\|\dbar^*N_1(\chi_{\eta}f_jd\zb_2)\|}{\|\chi_{\eta}f_j\|} \\ 
\geq & \limsup_{j\to\infty}  \frac{2}{\|\nabla\chi\|_{A_{1+\ep}^{\eta-\ep}}} 
\frac{\sqrt{\pi-2\beta\log \eta-\ep}}{\sqrt{\pi(\pi+2\beta\log \eta)(\eta^2-1)}}
\left(\frac {\delta}{2}\right)^{1-\alpha_j}\\
= &\frac{2}{\|\nabla\chi\|_{A_{1+\ep}^{\eta-\ep}}} 
\frac{\sqrt{\pi-2\beta\log \eta-\ep}}{\sqrt{\pi(\pi+2\beta\log \eta)(\eta^2-1)}}.
\end{align*}
Therefore, if we let $\ep\to 0$ and take supremum over $\chi$ we get 
\[\|\dbar^*N_1\|_e\geq \alpha_{A_{\eta}}
\frac{\sqrt{\pi-2\beta\log\eta}}{\sqrt{\pi(\pi+2\beta\log \eta)(\eta^2-1)}}.\]
Using the fact that $\|N_1\|_e=\|\dbar^*N_1\|_e^2$ on domains in $\C^2$ 
we get 
\[\|N_1\|_e\geq \frac{\alpha_{A_{\eta}}^2}{\pi (\eta^2-1)}
\frac{\pi-2\beta\log\eta}{\pi+2\beta \log \eta}.\]
Then \eqref{EqnAnnulus} implies that 
\[\|N_1\|_e\geq \left(\frac{\eta^2+1}{2}-\frac{\eta^2-1}{2\log \eta}\right)
\frac{\pi-2\beta\log \eta}{\pi+2\beta\log \eta}.\]
We complete the proof by taking maximum of the left hand side for 
$1<\eta<\min\{\e^{\pi/\beta},r\}$. 
\end{proof}

\section*{Acknowledgment}
We would like to thank Akaki Tikaradze for giving us the idea for the 
proof of Lemma \ref{LemAkaki}, Trieu Le for pointing out a mistake in 
an earlier version, and the referee for suggestions that has 
improved the paper.


\end{document}